\newcommand{\e}{\mathbb{E}}
\newcommand{\E}{\mathbb{E}}
\renewcommand{\P}{\mathbb{P}}
\newcommand{\R}{\mathbb{R}}
\newcommand{\1}{{\mathbf 1}}
\def\be{\begin{align}}
\def\ee{\end{align}}
\def\b*{\begin{eqnarray*}}
\def\e*{\end{eqnarray*}}
\def\vp{\varphi}
\def\be{\begin{eqnarray}}
\def\ee{\end{eqnarray}}
\def\beq{\begin{equation}}
\def\eeq{\end{equation}}
\def\b*{\begin{eqnarray*}}
\def\e*{\end{eqnarray*}}
\def\bi{\begin{itemize}}
\def\ei{\end{itemize}}
\def \1{{\bf 1}}
\def\vp{\varphi}
\def\eps{\varepsilon}
\def\={\;=\;}
\def\x{\times}
\def \proof{{\noindent \bf Proof. }}
\def \ep{\hbox{ }\hfill$\Box$}
 \def\reff#1{{\rm(\ref{#1})}}
 \def\vs#1{\vspace{#1mm}}
\def\red#1{\textcolor[rgb]{1,0,0}{ #1}}
\def \E{\mathbb{E}}
\def \F{\mathbb{F}}
\def \N{\mathbb{N}}
\def \P{\mathbb{P}}
\def \R{\mathbb{R}}
\def\T{\mathbb{T}}
\def\Bc{{\cal B}}
\def\Ec{{\cal E}}
\def\Fc{{\cal F}}
\def\Lc{{\cal L}}
\def\Nc{{\cal N}}
\def\Kc{{\cal K}}
\def\Lc{{\cal L}}
\def\Xb{\bar X}
\def\Zb{\bar Z}
\newtheorem{Theorem}{Theorem}[part]
\newtheorem{Definition}{Definition}[part]
\newtheorem{Proposition}{Proposition}[part]
\newtheorem{Assumption}{Assumption}[part]
\newtheorem{Lemma}{Lemma}[part]
\newtheorem{Corollary}{Corollary}[part]
\newtheorem{Remark}{Remark}[part]
\makeatletter \@addtoreset{equation}{section}
\def\qr{{\rm q}}
\def\Qr{{\rm Q}}
\def\kr{{\rm k}}
\def\Xb{\mathbf{X}}
\def\Mb{\mathbf{M}}
\def\vr{{\rm v}}
\def\Kc{{\mathcal K}}
\def\Zb{\mathbf Z}
\def\as{\rm a.s.}
\def\T{{\rm T}}
\def\zr{{\rm z}}
\def\U{{\rm U}}
\def\Ab{\mathbf A}
\def\Ec{{\rm E}}
\begin{document}

\title{Optimal control under uncertainty and Bayesian parameters adjustments  }

\author{N. Baradel\thanks{ENSAE-ParisTech, CREST, and, Université Paris-Dauphine, PSL Research University, CNRS, UMR [7534], CEREMADE, 75016 Paris, France. }, B. Bouchard\thanks{Université Paris-Dauphine, PSL Research University, CNRS, UMR [7534], CEREMADE, 75016 Paris, France. This research is supported by the Initiative de Recherche  ``Stratégies de Trading et d'Investissement Quantitatif'',   Kepler-Chevreux and Collège de France. B. Bouchard is supported in part by the ANR project CAESARS (ANR-15-CE05-0024).}, N.~M. Dang\thanks{JVN Institute, VNU HCM. This research is funded by Vietnam National University HoChiMinh City (VNU-HCM) under grant number C2015-42-03.}}
\date{First version: January 2017, Revised version: September 2017}
\maketitle

\begin{abstract} We propose a general framework for studying optimal impulse control problem in the presence of uncertainty on the parameters.   Given a prior on the distribution of the unknown parameters, we explain how it should evolve according to the classical Bayesian rule  after each impulse. Taking these progressive prior-adjustments into account, we characterize the optimal policy through a quasi-variational parabolic equation, which can be solved numerically. The derivation of the dynamic programming equation seems to be new in this context. The main difficulty lies in the nature of the set of controls which depends in a non trivial way on the initial data through the filtration itself. 
  \end{abstract}
 
\noindent{Key words: } Optimal control, uncertainty, Bayesian filtering. 
\vs2

\noindent{MSC 2010: 49L20, 49L25}

 \section{Introduction}

%%revoir ce paragraphe
We consider a general optimal impulse control problem under parameter uncertainty. This work is motivated by optimal trading problems. In this domain,  several market parameters are of major importance. It can be the nature of the market impact of aggressive orders, or the time to be executed when entering a book order queue, see e.g.~\cite{lehalle2013market} and the references therein.  However, the knowledge of these execution conditions is in general not perfect. One can try to estimate them but they remain random and can change from one market/platform to another one, or depending on the current market conditions. Most importantly, they can only be estimated  by actually acting on the market. We therefore face the typical problem of estimating a reaction parameter  (impact/execution time) while actually controlling a system (trading) that depends on these parameters. 

Such problems have been widely studied in the discrete time stochastic optimal control literature, see e.g.~\red{\cite{easley,HL.12}} for references. One fixes a certain prior distribution on the unknown parameter, and re-evaluate it each time an action is taken, by applying the standard Bayesian rule to the observed reactions. The optimal strategy   generically results from a compromise between acting on the system, to get more information, and being not too  aggressive, because of the uncertainty on the real value of the parameters.   
 If the support of the initial prior contains the true value of the parameters, one can expect (under natural identification conditions) that the sequence of updated priors actually converges to it in the long range. 

  It is a-priori much more difficult to handle in a continuous time framework with continuous time monitoring, as it leads to a filtering problem, leaving on an infinite dimensional space. However,  optimal trading can very naturally be considered in the impulse form, as orders are sent in a discrete time manner. In a sense, we are back to a discrete time problem whose dimension can be finite (depending on the nature of the uncertainty), although interventions on the system may occur at any time.  

In this paper, we thus  consider  a general impulse control problem with an unknown parameter, under which an initial prior law is set. Given this prior, we aim at maximizing a certain gain functional. We   show that the corresponding value function can be characterized as the unique viscosity solution (in a suitable class) of a quasi-variational parabolic equation. We also allow for (possibly) not observing immediately the effect of an impulse. This applies to any situations in which the effect of an impulse is observed only with delay, e.g.~nothing is observed but the execution time when an order is sent to a dark pool.  

The study of such non-classical impulse control problems seems to be new in the literature. From the mathematical point of view, the main difficulty consists in establishing a dynamic programming principle. The principal reason lies in the choice of the filtration. Because of the uncertainty on the parameter driving the dynamics, the only natural filtration to which the control policy should be adapted is the one generated by the controlled process himself. This implies in particular that the set of admissible controls depends heavily (and in a very non trivial way) on the initial state of the system at the starting time of the strategy. Hence, no a priori regularity nor good measurability properties can be expected to construct explicitly measurable almost optimal controls, see e.g.~\cite{bouchard2011weak}, or to apply a measurable selection theorem, see e.g.~\cite{dimitri1996stochastic}.   We therefore proceed differently. The (usually considered as) easy part of the dynamic programming can actually be proved, as it only requires a conditioning argument. It leads as usual to a sub-solution characterization. We surround the difficulty in proving the second (difficult) part by considering a discrete time version of our initial continuous time control problem. When the time step goes to $0$, it provides a super-solution of the targeted dynamic programming equation. Using comparison and the natural ordering on the value functions associated to the continuous and the discrete time model, we show that the two coincide at the limit.

Applications to optimal trading and an example of numerical scheme are provided in {the application paper } \cite{BBD16extended}.

The rest of the paper is organized as follows. The model is described in Section 2. In Section 3, we provide the PDE characterization of the value function. Proofs are collected in Section \ref{sec: proof disco}.  A sufficient condition for comparison to hold is provided in Section \ref{preuve_comparaison}. 
 
\parindent=0pt

%%%%%%%%%%%%%%%%%%%%%%%%%%%%%%%%%%%%%%%%%%%%%%%
\section{The impulse problem with parameters adjustment}\label{sec: setup}

All over this paper,   $C([0,T],\R^{d}) $ is the space of continuous functions from $[0, T]$ into $\mathbb{R}^{d}$ which start at $0$ at the origin. Recall that it is a Polish space for the sup-norm topology. We denote by $W(\omega)=\omega$ the canonical process and let $\P$ be the Wiener measure. 
We also consider a Polish space $(\U,\Bc(\U))$  that will support an unknown parameter $\upsilon$. We denote by $\Mb$ a locally compact subset\footnote{{In many situations, the family of probability measures of interest will in fact be parameterized or  be the set of measures on a compact metrizable space, see Remark \ref{rem: loc compact} below.} } of the set of Borel probability measures on $\U$ endowed with the topology of weak convergence. In particular,  it is  Polish.  A prior on the unknown parameter $\upsilon$ will be an element $m \in \Mb$. 
To allow for additional randomness in the measurement of the effects of actions on the system, we consider another Polish space $\Ec$ on which is defined a family $(\epsilon_{i})_{i \geq 0}$ of i.i.d.~random variables with common measure $\mathbb{P}_{\epsilon}$ on $\Ec$. On the product space $ {\Omega} := C([0,T],\R^{d}) \times \U \times \Ec^{\mathbb{N}}$, we consider the family of measures $\{\mathbb{P} \times m \times \mathbb{P}_{\epsilon}^{\otimes \mathbb{N}} : m \in \Mb\}$ and denote by $\mathbb{P}_{m}$ an element of this family whenever $m \in \Mb$ is fixed. The operator $\mathbb{E}_{m}$ is the expectation associated to $\mathbb{P}_{m}$. Note that $W$, $\upsilon$ and $(\epsilon_{i})_{i\ge 0}$ are independent under each $\P_{m}$. 
For $m\in \Mb$ given, we let $\F^{m}=(\Fc^{m}_{t})_{t\ge 0}$ denote the $\P_{m}$-augmentation of the filtration $ \F= (  \Fc_{t})_{t\ge 0}$ defined by $  \Fc_{t}=\sigma((W_{s})_{s\le t}, \upsilon,(\epsilon_{i})_{i \geq 0})$ for $t\ge 0$. Hereafter, all the random variables are considered with respect to the probability space $(\Omega,\Fc_{T}^{m})$ with $m\in \Mb$ given by the context, and where $T$ is a fixed time horizon.

 %%%%%%%%%%%%%%%%%%%%%%%%%%%%%%%
\subsection{The controlled system}

Let  $\Ab \subset [0, T] \times \mathbb{R}^{d}$ be  a (non-empty) compact set.  Given $N\in \N$ and $m\in \Mb$, we denote by  $\Phi^{\circ,m}_{N}$ the collection of sequences of random variables $\phi=(\tau_{i},\alpha_{i})_{i\ge 1}$ on $(\Omega,\Fc_{T}^{m})$ with values in $\R_{+}\x \Ab$ such that $(\tau_{i})_{i\ge 1}$ is a non-decreasing sequence of $\F^{m}$-stopping times satisfying $\tau_{j}>T$ $\P_{m}-\as$~for $j>N$.  We set 
    \[
        \Phi^{\circ,m} := \bigcup_{N \geq 1}\Phi^{\circ,m}_{N}.
    \]
 An element $\phi=(\tau_{i},\alpha_{i})_{1\le i\le N}\in \Phi^{\circ,m}$ will be our impulse control and we write $\alpha_{i}$ in the form 
 $$
 \alpha_{i}=(\ell_{i},\beta_{i}) \mbox{ with }  \ell_{i}\in [0,T] \mbox{ and }\beta_{i}\in \R^{d} \;\P_{m}-\as
 $$  
 More precisely, the $\tau_{i}$'s will be the times at which an impulse is made on the system (e.g.~a trading robot is launched), $\beta_{i}$ will model the nature of the order send at time $\tau_{i}$ (e.g.~the parameters used for the trading robot), and $\ell_{i}$ will stand for the maximal time length during which  no new intervention on the system can be made (e.g.~the time prescribed to the robot to send orders on the market). Later on we shall impose more precise non-anticipativity conditions.  
\vs2

From now on, we shall always use the notation $(\tau^{\phi}_{i},\alpha^{\phi}_{i})_{i\ge 1}$ with $\alpha^{\phi}_{i}=(\ell^{\phi}_{i},\beta^{\phi}_{i})$ to refer to a control $\phi \in \Phi^{\circ,m}$. 
\vs2
    
We allow for not observing nor being able to act on the system before  a random time $\vartheta^{\phi}_{i}$ defined by 
 \[
    \vartheta^{\phi}_{i} := \varpi(\tau^{\phi}_{i},X^{\phi}_{\tau^{\phi}_{i}-},\alpha^{\phi}_{i}, \upsilon, \epsilon_{i}),
  \]
where  $X^{\phi}$ is the controlled state process that will be described below, and 
\be\label{eq: hype varpi}
\varpi :\R_{+}\x \R^{d}\x \Ab \times \U \times \Ec \rightarrow [0, T]\;\text{ is measurable, such that $\varpi(t,\cdot)\ge t$ for all $t\ge 0$. }
\ee  
In the case where the actions consist in launching a trading robot at $\tau_{i}^{\phi}$ during a certain time $\ell^{\phi}_{i}$,  we can naturally take $ \vartheta^{\phi}_{i}=\tau^{\phi}_{i}+\ell^{\phi}_{i}$. If the action consists in placing a limit order during a maximal duration $\ell^{\phi}_{i}$, $\vartheta^{\phi}_{i}$ is the time at which the limit order is executed if it is less than $\tau^{\phi}_{i}+\ell^{\phi}_{i}$, and $\tau^{\phi}_{i}+\ell^{\phi}_{i}$ otherwise. 
\vs2

We say that $\phi \in \Phi^{\circ,m}$ belongs to $\Phi^{m}$ if 
$
\vartheta^{\phi}_{i}\le \tau^{\phi}_{i+1}\; \mbox{ and }\; \tau^{\phi}_{i}<\tau^{\phi}_{i+1}$  $\P_{m}$-a.s. for all $i\ge 1$, 
and define 
\be\label{eq: def Nphi}
\Nc^{\phi}:=\left[\cup_{i\ge 1}[\tau^{\phi}_{i},\vartheta^{\phi}_{i})\right]^{c}.
\ee

We are now in a position to describe our controlled state process. 
Given some initial data $z:=(t, x) \in \Zb := [0,T]\x \R^{d}$,  and  $\phi \in \Phi^{m}$, we let $X^{z,\phi}$ 
be the unique strong solution on $[t,2T]$ of 
\begin{align}
		X = x&+  \left( \int_{t}^{\cdot}\1_{\Nc^{\phi}}(s)\mu\left(s,X_s\right)ds + \int_{t}^{\cdot}\1_{\Nc^{\phi}}(s)\sigma\left(s,X_{s}\right)dW_{s}   \right) \nonumber       \\
		&+  \sum_{i\ge 1}   \1_{\{t\le \vartheta^{\phi}_{i}\le \cdot\}} [ F(\tau^{\phi}_{i},X_{\tau^{\phi}_{i}-},   \alpha^{\phi}_{i}, \upsilon, \epsilon_{i})-X_{\tau^{\phi}_{i}-}]  .\label{eq: dyna X} 
\end{align}
In the above, the function 
\be\label{eq: hyp mu sigma F}
\begin{array}{c}
(\mu,\sigma,F) :  \R_{+}\times \R^{d} \x \Ab \times \U \times \Ec\mapsto  \R^{d}\times \mathbb{M}^{d} \times \R^{d} \;\text{ is measurable.}
\\
\text{ The map $(\mu,\sigma)$ is  continuous, and   Lipschitz with linear growth  }\\
\text{in its second argument, uniformly in the first one,}
\end{array}
\ee
with $\mathbb{M}^{d}$ defined as the set of $d\times d$ matrices. This dynamics means the following. When no action is currently made on the system, i.e.~on the intervals in  $\Nc^{\phi}$, the system evolves according to a stochastic differential equation  driven by the Brownian motion $W$:
$$
dX_{s}=\mu\left(s,X_s\right)ds+\sigma\left(s,X_{s}\right)dW_{s} \;\;\mbox{ on $\Nc^{\phi}$}.
$$
When an impulse is made at $\tau^{\phi}_{i}$, we freeze the dynamics up to the end of the action at time $\vartheta^{\phi}_{i}$. This amounts to saying that we do not observe the current evolution up to $\vartheta^{\phi}_{i}$. At the end of the action, the  state process takes a new value 
$
X_{\vartheta^{\phi}_{i}}=F(\tau^{\phi}_{i}, X_{\tau^{\phi}_{i}-},  \alpha^{\phi}_{i}, \upsilon, \epsilon_{i}). 
$
The fact that $F$ depends on the unknown parameter $\upsilon$ and the additional noise $\epsilon_{i}$ models the fact the correct model  is not known with certainty, and that the exact value of the unknown parameter $\upsilon$ can (possibly) not be measured precisely just by observing  $(\vartheta^{\phi}_{i}-\tau^{\phi}_{i},X_{\vartheta^{\phi}_{i}}-X_{\tau^{\phi}_{i}-})$. 
 \vs2
 
 In order to simplify the notations, we shall now write:
 \begin{align}\label{eq: def Z Zcirc}
 Z^{z,\phi}:=(\cdot,X^{z,\phi})\;\;\mbox{ and }\;\;
 Z^{z,\circ}:=(\cdot,X^{z,\circ})
 \end{align}
 in which $X^{z,\circ}$ denotes the solution of \reff{eq: dyna X} for $\phi$ such that $\tau_{1}^{\phi}>T$ and satisfying $X_{t}^{z,\circ}=x$. This corresponds to the stochastic differential equation \reff{eq: dyna X} in the absence of impulse. Note in particular that 
 \begin{align}\label{eq: flow prop X} 
 &Z^{z,\phi}_{\vartheta^{\phi}_{1}}=\zr'(Z^{z,\circ}_{\tau^{\phi}_{1}-} ,\alpha^{\phi}_{1}, \upsilon, \epsilon_{1})\;\; \mbox{ on $\{\tau^{\phi}_{1}\ge t\}$,}
 \;\;\;\mbox{with }\;\; {\rm z}':=(\varpi,F).
\end{align} 
 
 From now on, we denote by $\F^{z,m,\phi}=(\Fc^{z,m,\phi}_{s})_{t\le s\le 2T}$ the $\P_{m}$-augmentation of the filtration generated by 
$(X^{z,\phi},\sum_{i\ge 1}\1_{[\vartheta_{i}^{\phi},\infty)})
$  
on $[t,2T]$. We say that $\phi \in \Phi^{m}$ belongs to $\Phi^{z,m}$ if $(\tau_{i}^{\phi})_{i\ge 1}$ is a sequence of $\F^{z,m,\phi}$-stopping times  and $\alpha^{\phi}_{i}$ is $\Fc^{z,m,\phi}_{\tau^{\phi}_{i}}$-measurable, for each $i\ge 1$.  Hereafter an admissible control will be an element of $\Phi^{z,m}$.

%%%%%%%%%%%%%%%%%%%%%%
\subsection{Bayesian updates}

Obviously, the prior $m$ will evolve with time, as the value of the unknown parameter is partially revealed through the observation of the impacts of the actions on the system: at time $t$, one has observed  $\{\zr'(Z^{z,\phi}_{\tau^{\phi}_{i}-} ,\alpha^{\phi}_{i}, \upsilon, \epsilon_{i}):$ $i\ge 1, \vartheta^{\phi}_{i}\le t\}$.
 It should therefore be considered as a state variable, in any case, as its dynamics will naturally appear in any dynamic programming principle related to the optimal control of $X^{z,\phi}$, see Proposition \ref{prop: DPP} below.   
 Moreover, its evolution can be of interest in itself. One can for instance be interested by the precision of our (updated) prior at the end of the control period, as it can serve as a new prior for another control problem.
 \vs2 
 
 In this section, we   describe   how it is updated with time, according to the usual Bayesian procedure. 
Given $z=(t,x)\in \Zb$, $u\in \U$ and $a\in \Ab$, we assume that the law {under $\mathbb{P}_{\epsilon}$} of ${\rm z}'[z,a,u,\epsilon_{1}]$, recall \reff{eq: flow prop X}, 
is given by 
	$
		\qr(\cdot| z, a, u)d\Qr(\cdot|z, a),
	$
in which $\qr(\cdot| \cdot)$ is a Borel measurable map and $\Qr(\cdot|z, a)$ is a dominating measure  on $\Zb$ for each $(z,a)\in \Zb\x \Ab$.
For $z=(t,x)\in \Zb$, $m\in \Mb$ and $\phi\in \Phi^{z,m}$, let $M^{z,m,\phi}$ be the   process defined by 
\be\label{eq: def M}
M^{z,m,\phi}_{s}[C]:=\P_{m}[\upsilon \in C|\Fc^{z,m,\phi}_{s}],\;\;{ C\in \Bc(\U) }, \;s\ge t. 
\ee
As  no new information is revealed in between the end of an action and the start of the next one, the prior should remain constant on these time intervals: 
\begin{align}\label{eq: dyna m out of vartheta}
M^{z,m,\phi}=M^{z,m,\phi}_{\vartheta^{\phi}_{i}} \mbox{ on } [\vartheta^{\phi}_{i},\tau^{\phi}_{i+1}) \;,\;\;i\ge 0,
\end{align}
with the conventions $\vartheta^{\phi}_{0}=0$ and $M^{z,m,\phi}_{0}=m$. But, $M^{z,m,\phi}$ should jump from each $\tau^{\phi}_{i}$ to each $\vartheta^{\phi}_{i}$, $i\ge 1$, according to the Bayes rule:
\begin{align} 
M^{z,m,\phi}_{\vartheta^{\phi}_{i}}  &={\mathfrak M}(M^{z,m,\phi}_{\tau^{\phi}_{i}-};Z^{z,\phi}_{\vartheta^{\phi}_{i}},Z^{z,\phi}_{\tau^{\phi}_{i}-},\alpha^{\phi}_{i}),\;\;i\ge 1,\label{eq: dyna m}
\end{align}
in which 
\be\label{eq: def M mathfrak}
{\mathfrak M}(m_{o};z'_{o},z_{o},a_{o})[C]:=\frac{\int_{C}  \qr(z'_{o}|z_{o},a_{o}, u) dm_{o}(u)}
{\int_{\U}  \qr(z'_{o}|z_{o},a_{o}, u) dm_{o}(u)},
\ee
for almost all $(z_{o},z'_{o},a_{o},m_{o})\in \Zb^{2}\x \Ab\x \Mb$ and $C\in \Bc(\U)$. 
\vs2

Note that we did not {specify} $M^{z, m, \phi}$ on each $[\tau^{\phi}_{i}, \vartheta^{\phi}_{i})$ since the controller must wait  until $\vartheta^{\phi}_{i}$ before being able to make another action. A partial information on $\upsilon$ through $\vartheta^{\phi}_{i}$ is known as a right-censored observation of $\vartheta^{\phi}_{i}$ is revealed through the interval $[\tau^{\phi}_{i}, \vartheta^{\phi}_{i})$.

In order to ensure that $M^{z,m,\phi}$ remains in $\Mb$ whenever $m\in \Mb$, {we need the following standing assumption:}
\begin{Assumption}[Standing Assumption]
$$
{{\mathfrak M}(\Mb;\cdot)\subset \Mb.}
$$
\end{Assumption}

\begin{Remark}\label{rem: loc compact} {The above assumption means that we have to define a locally compact space $\Mb$ such  the initial prior belongs to $\Mb$, and that is stable under the operator ${\mathfrak M}$. It is important for the use of viscosity solutions. This is clearly a limitation of our approach, from a theoretical point of view. An alternative would be to lift $\Mb$ to the space of square integrable random variables, and then use the methodologies developped in the context of mean-field games (see e.g.~\cite[Section 6]{carda12}). We prevent from doing this for sake of clarity. On the other hand, our assumptions are satisfied in many pratical applications where $\Mb$ is either  a set  of measures defined on a metrizable compact space, see e.g.~\cite[Proposition 7.22 p130]{dimitri1996stochastic}, or a parameterized family (which needs to be the case eventually if a  numerical resolution is performed). If it is a parameterized family, it suffices  to find an homeomorphism $f$ from an open set of $\R^{k}$, $k\ge 1$, to $\Mb$ to ensure that $\Mb$ is locally compact. On the other hand, the stability of $\Mb$ with respect to ${\mathfrak M}$ can be ensured by using conjugate families, as explained in e.g.~\cite[Chapter 5.2]{bernardo2001bayesian}. The simplest example being the convex hull of a family of Dirac masses.  See  \cite{BBD16extended} for examples of applications.}
\end{Remark}

We formalize the dynamics of $M^{z,m,\phi}$   in the next proposition.   

\begin{Proposition}\label{prop: dyna M} For  all $z=(t,x)\in \Zb$, $m\in \Mb$ and $\phi\in \Phi^{z,m}$, the process $M^{z,m,\phi}$ is $\Mb$ valued and follows the dynamics  \reff{eq: dyna m out of vartheta}-\reff{eq: dyna m} on $[t,2T]$. 
\end{Proposition}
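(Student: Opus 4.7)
The plan is a direct induction on $i\ge 0$, exploiting the three-fold $\P_m$-independence of $W$, $\upsilon$, and $(\epsilon_j)_{j\ge 1}$ built into the product structure of $\Omega$. For the initialisation at $s=t$, the sigma-algebra $\Fc^{z,m,\phi}_t$ is $\P_m$-trivial (only the deterministic value $X^{z,\phi}_t=x$ is recorded and no jump of the counting process has yet occurred), so $M^{z,m,\phi}_t[C]=\P_m[\upsilon\in C]=m[C]$ for every $C\in\Bc(\U)$, which matches the convention $M^{z,m,\phi}_0=m$.

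For the constancy statement \reff{eq: dyna m out of vartheta}, fix $i\ge 0$ (with $\vartheta_0^\phi:=t$) and $s\in[\vartheta_i^\phi,\tau_{i+1}^\phi)$. On this stochastic interval the counting process $\sum_{j\ge 1}\1_{[\vartheta_j^\phi,\infty)}$ is constant and, by \reff{eq: dyna X}-\reff{eq: def Nphi}, $X^{z,\phi}$ solves a plain SDE driven by $W$ with no jump. Hence, up to $\P_m$-null sets, $\Fc^{z,m,\phi}_s$ coincides with the sigma-algebra generated by $\Fc^{z,m,\phi}_{\vartheta_i^\phi}$ together with the Brownian increments $\{W_r-W_{\vartheta_i^\phi}:\vartheta_i^\phi\le r\le s\}$. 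These increments are $\P_m$-independent of $(\upsilon,\Fc^{z,m,\phi}_{\vartheta_i^\phi})$, so a standard conditioning argument yields $\P_m[\upsilon\in C\,|\,\Fc^{z,m,\phi}_s]=\P_m[\upsilon\in C\,|\,\Fc^{z,m,\phi}_{\vartheta_i^\phi}]$, which is \reff{eq: dyna m out of vartheta}.

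For the jump equation \reff{eq: dyna m}, set $\Gc:=\Fc^{z,m,\phi}_{\tau_i^\phi-}$ and assume inductively that the $\P_m$-conditional law of $\upsilon$ given $\Gc$ is $M^{z,m,\phi}_{\tau_i^\phi-}$. Because $X^{z,\phi}$ is constant on $[\tau_i^\phi,\vartheta_i^\phi)$ and the counting process jumps only at $\vartheta_i^\phi$, one checks that $\Fc^{z,m,\phi}_{\vartheta_i^\phi}=\sigma(\Gc,Z^{z,\phi}_{\vartheta_i^\phi})$ modulo null sets, so in particular $Z^{z,\phi}_{\tau_i^\phi-}$ and $\alpha_i^\phi$ are $\Gc$-measurable. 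Since $\epsilon_i$ is independent of $(\Gc,\upsilon)$ under $\P_m$, the identity \reff{eq: flow prop X} together with the assumed form of the law of $\zr'[\,\cdot\,,\epsilon_1]$ shows that the regular conditional law of $Z^{z,\phi}_{\vartheta_i^\phi}$ given $\sigma(\Gc,\upsilon)$ has density $\qr(\cdot|Z^{z,\phi}_{\tau_i^\phi-},\alpha_i^\phi,\upsilon)$ with respect to $\Qr(\cdot|Z^{z,\phi}_{\tau_i^\phi-},\alpha_i^\phi)$. An application of Bayes' formula then produces exactly the update formula \reff{eq: def M mathfrak} evaluated at $m_o=M^{z,m,\phi}_{\tau_i^\phi-}$, $z_o=Z^{z,\phi}_{\tau_i^\phi-}$, $z'_o=Z^{z,\phi}_{\vartheta_i^\phi}$, $a_o=\alpha_i^\phi$, i.e.~\reff{eq: dyna m}. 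The $\Mb$-valuedness is then automatic: the Standing Assumption propagates the property from $M^{z,m,\phi}_{\tau_i^\phi-}\in\Mb$ to $M^{z,m,\phi}_{\vartheta_i^\phi}\in\Mb$, and the constancy step preserves it on $[\vartheta_i^\phi,\tau_{i+1}^\phi)$.

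The hard part will be the two measurability/conditional-independence statements underpinning the Bayes step: the identification $\Fc^{z,m,\phi}_{\vartheta_i^\phi}=\sigma(\Gc,Z^{z,\phi}_{\vartheta_i^\phi})$ modulo $\P_m$-null sets, and the corresponding disintegration of the law of $Z^{z,\phi}_{\vartheta_i^\phi}$ given $\sigma(\Gc,\upsilon)$. Both rely on the product structure of $(\Omega,\P_m)$ and on the strong-solution representation of $X^{z,\phi}$ from \reff{eq: dyna X}, but they demand that one keep careful track of what $\Fc^{z,m,\phi}$ actually records at each jump time — in particular, that the post-jump pair $Z^{z,\phi}_{\vartheta_i^\phi}=(\vartheta_i^\phi,X^{z,\phi}_{\vartheta_i^\phi})$ is revealed, while no further information on $\epsilon_i$ or $\upsilon$ is leaked beyond what is encoded in $Z^{z,\phi}_{\vartheta_i^\phi}$.
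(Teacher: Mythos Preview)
Your approach is correct and essentially the same as the paper's: constancy \reff{eq: dyna m out of vartheta} via the conditional independence of $\upsilon$ from the post-$\vartheta^{\phi}_{i}$ evolution, and the jump \reff{eq: dyna m} via Bayes' rule using the density $\qr$. The paper carries both steps out by testing against bounded Borel functions $\vp$ on path space (rewriting $\vp(X^{z,\phi}_{\cdot\wedge s},\xi^{\phi}_{\cdot\wedge s})$ as a function of the pre-jump data and the relevant increment) rather than first identifying the sigma-algebras as you do, which makes the disintegrations you flag as the hard part explicit, but the substance is identical.
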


\begin{proof}   Let $C$ be a Borel set of $\U$ and $\vp$ be a Borel bounded function on the Skorohod space ${\rm D^{d+1}}$ of c\`{a}dl\`{a}g\footnote{continue \`{a} droite et limit\'{e}e \`{a} gauche (right continuous with left limits)} functions with values in $\R^{d+1}$. Set $\xi^{\phi}:=\sum_{i\ge 1}\1_{[\vartheta_{i}^{\phi},\infty)}$ and set $\delta X^{i}:= X_{\cdot\vee \vartheta^{\phi}_{i}}^{z, \phi}-X_{\vartheta^{\phi}_{i}}^{z, \phi}$. One can find a Borel measurable map $\bar \vp$ on ${\rm D^{2d+1}}$ such that 
$$
\vp(  X^{z,\phi}_{\cdot\wedge s},\xi^{\phi}_{\cdot\wedge s})\1_{\{\vartheta^{\phi}_{i}\le s<  \tau^{\phi}_{i+1}\}}=\bar \vp(X^{z,\phi}_{\cdot\wedge \vartheta^{\phi}_{i}}, \delta X^{i}_{\cdot \wedge s},\xi^{\phi}_{\cdot\wedge \vartheta^{\phi}_{i}})\1_{\{\vartheta^{\phi}_{i}\le s<  \tau^{\phi}_{i+1}\}}. 
$$
Then, the independence of $\upsilon$ with respect to $\sigma(W_{\cdot\vee \vartheta^{\phi}_{i}}-W_{\vartheta^{\phi}_{i}})$ given $\Fc^{z,m,\phi}_{\vartheta^{\phi}_{i}}$, and the fact that $\tau^{\phi}_{i+1}$ is measurable with respect to the sigma-algebra generated by $\sigma(W_{\cdot\vee \vartheta^{\phi}_{i}}-W_{\vartheta^{\phi}_{i}})$ and $\Fc^{z,m,\phi}_{\vartheta^{\phi}_{i}}$ imply that, for $s\ge 0$,  
\begin{align*}
\E_{m}\left[\1_{\{\upsilon\in C\}}\vp(  X^{z,\phi}_{\cdot\wedge s},\xi^{\phi}_{\cdot\wedge s})\1_{\{\vartheta^{\phi}_{i}\le s< \tau^{\phi}_{i+1}\}} \right]
&=\E_{m}\left[\1_{\{\upsilon\in C\}}\bar \vp(X^{z,\phi}_{\cdot\wedge \vartheta^{\phi}_{i}}, \delta X^{i}_{\cdot \wedge s},\xi^{\phi}_{\cdot\wedge \vartheta^{\phi}_{i}})\1_{\{\vartheta^{\phi}_{i}\le s<  \tau^{\phi}_{i+1}\}} \right]\\
&=\E_{m}\left[ M^{z,m,\phi}_{\vartheta^{\phi}_{i}}[C]\bar \vp(X^{z,\phi}_{\cdot\wedge \vartheta^{\phi}_{i}}, \delta X^{i}_{\cdot \wedge s},\xi^{\phi}_{\cdot\wedge \vartheta^{\phi}_{i}})\1_{\{\vartheta^{\phi}_{i}\le s<  \tau^{\phi}_{i+1}\}} \right]\\
&=\E_{m}\left[ M^{z,m,\phi}_{\vartheta^{\phi}_{i}}[C]\vp(X^{z,\phi}_{\cdot\wedge s},\xi^{\phi}_{\cdot\wedge s})\1_{\{\vartheta^{\phi}_{i}\le s<  \tau^{\phi}_{i+1}\}} \right].
\end{align*}
This shows  that $M^{z,m,\phi}_{s}[C]\1_{\{\vartheta^{\phi}_{i}\le s<  \tau^{\phi}_{i+1}\}}=M^{z,m,\phi}_{\vartheta^{\phi}_{i}}[C]\1_{\{\vartheta^{\phi}_{i}\le s<  \tau^{\phi}_{i+1}\}}$ $\P_{m}-\as$

It remains to compute $M^{z,m,\phi}_{\vartheta^{\phi}_{i}}$.  Note that \reff{eq: dyna X} implies that  $(X^{z,\phi}_{\tau^{\phi}_{i}-},$ $\xi^{\phi}_{\tau^{\phi}_{i}-})$ $=$ $(X^{z,\phi}_{\vartheta^{\phi}_{i}-},$ $\xi^{\phi}_{\vartheta^{\phi}_{i}-})$. Let $\vp$ be as above, and let  $\bar \vp$ be a Borel measurable map on ${\rm D^{d+1}}\x \R_{+}\x \R^{d}$ such that 
\begin{align*}
\vp(  X^{z,\phi}_{\cdot\wedge \vartheta^{\phi}_{i}},\xi^{\phi}_{\cdot\wedge \vartheta^{\phi}_{i}})&=\bar \vp(X^{z,\phi}_{\cdot\wedge \tau^{\phi}_{i}-},\xi^{\phi}_{\cdot\wedge \tau^{\phi}_{i}-},  \vartheta^{\phi}_{i},X^{z,\phi}_{\vartheta^{\phi}_{i}})=\bar \vp(X^{z,\phi}_{\cdot\wedge \tau^{\phi}_{i}-},\xi^{\phi}_{\cdot\wedge \tau^{\phi}_{i}-},  
{\rm z}'[\tau^{\phi}_{i},X^{z,\phi}_{\tau^{\phi}_{i}-},\alpha^{\phi}_{i},\upsilon,\epsilon_{i}] ).
\end{align*}
Then, since $\epsilon_{i}$ is independent of $\Fc^{z,m,\phi}_{\tau^{\phi}_{i}}$ and has the same law as $\epsilon_{1}$, 
\begin{align*}
&\E_{m}\left[\1_{\{\upsilon\in C\}}\vp(  X^{z,\phi}_{\cdot\wedge \vartheta^{\phi}_{i}},\xi^{\phi}_{\cdot\wedge \vartheta^{\phi}_{i}})\right]
\\&=\E_{m}\left[\1_{\{\upsilon\in C\}}\bar \vp(X^{z,\phi}_{\cdot\wedge \tau^{\phi}_{i}-},\xi^{\phi}_{\cdot\wedge \tau^{\phi}_{i}-},  
{\rm z}'[\tau^{\phi}_{i},X^{z,\phi}_{\tau^{\phi}_{i}-},\alpha^{\phi}_{i},\upsilon,\epsilon_{i}] )\right]\\
&=\E_{m}\left[\int \1_{\{\upsilon\in C\}}\bar \vp(X^{z,\phi}_{\cdot\wedge \tau^{\phi}_{i}-},\xi^{\phi}_{\cdot\wedge \tau^{\phi}_{i}-},  
z') \qr(z'|Z^{z,\phi}_{\tau^{\phi}_{i}-},\alpha^{\phi}_{i},\upsilon)d\Qr(z'|Z^{z,\phi}_{\tau^{\phi}_{i}-},\alpha_{i}^{\phi}) )\right]\\
&=\E_{m}\left[\int \bar \vp(X^{z,\phi}_{\cdot\wedge \tau^{\phi}_{i}-},\xi^{\phi}_{\cdot\wedge \tau^{\phi}_{i}-},  
z') \left( \int_{C} \qr(z'|Z^{z,\phi}_{\tau^{\phi}_{i}-}, \alpha_{i}^{\phi},u)dM^{z,m,\phi}_{\tau_{i}^{\phi}-}(u)\right)d\Qr(z'|Z^{z,\phi}_{\tau^{\phi}_{i}-},\alpha^{\phi}_{i}) )\right].
\end{align*}
Let us now introduce the notation 
$
{\mathfrak M}_{i}[C](z'):={\mathfrak M}(M^{z,m,\phi}_{\tau^{\phi}_{i}-};z',Z^{z,\phi}_{\tau^{\phi}_{i}-},\alpha^{\phi}_{i}).
$
Then, 
\begin{align*}
&\E_{m}\left[\1_{\{\upsilon\in C\}}\vp(  X^{z,\phi}_{\cdot\wedge \vartheta^{\phi}_{i}},\xi^{\phi}_{\cdot\wedge \vartheta^{\phi}_{i}})\right]
\\
&=\E_{m}\left[\int \bar \vp(X^{z,\phi}_{\cdot\wedge \tau^{\phi}_{i}-},\xi^{\phi}_{\cdot\wedge \tau^{\phi}_{i}-},  
z'){\mathfrak M}_{i}[C](z')
 \qr(z'|Z^{z,\phi}_{\tau^{\phi}_{i}-},\alpha_{i}^{\phi},\upsilon)d\Qr(z'|Z^{z,\phi}_{\tau^{\phi}_{i}-},\alpha_{i}^{\phi}) )\right]\\
&=\E_{m}\left[   \vp(X^{z,\phi}_{\cdot\wedge \vartheta^{\phi}_{i}},\xi^{\phi}_{\cdot\wedge \vartheta^{\phi}_{i}}){\mathfrak M}_{i}[C](Z^{z,\phi}_{\vartheta^{\phi}_{i}})\right].
\end{align*}
This concludes the proof.\ep\end{proof}

%\begin{Remark}\label{rem: measure abso cont} Note from \reff{eq: def M mathfrak} that $M^{z,m,\phi}$ remains absolutely continuous with respect to $m$ over time.  \end{Remark}

 \begin{Remark}\label{rem: joint condi distri ZM}
For later use, note  that the above provides  the joint conditional distribution of  $(Z^{z,\phi}_{\vartheta^{\phi}_{i}},M^{z,m,\phi}_{\vartheta^{\phi}_{i}})$ given ${\Fc^{z,m,\phi}_{\tau_{i}-}}$. Namely, for Borel sets $B \in \mathcal{B}([t,T]{\x}\mathbb{R}^{d})$ and $D \in \mathcal{B}(\Mb)$, a simple application of Fubini's Lemma implies that 
 \begin{equation}\label{eq: joint cond law X,M}
\P[(Z^{z,\phi}_{\vartheta^{\phi}_{i}},M^{z,m,\phi}_{\vartheta^{\phi}_{i}})\in B\x D |{\Fc^{z,m,\phi}_{\tau^{\phi}_{i}-}}]= \kr(B\x D|Z^{z,\phi}_{\tau^{\phi}_{i}-}, M^{z,m\phi}_{\tau^{\phi}_{i}-},\alpha^{\phi}_{i})
 \end{equation}
 in which
 \be\label{eq: def nu}
 \kr(B\x D| z_{o},m_{o},a_{o}):=
\int_{\U} \int_{B}  \1_{D}({\mathfrak M}(m_{o};z',z_{o},a_{o})) \qr(z'|z_{o},a_{o}, u)d\Qr(z'|z, a) dm_{o}(u),\;\;
 \ee
 for $(z_{o},m_{o},a_{o})\in \Zb\x \Mb\x \Ab.$  
\end{Remark}

%%%%%%%%%%%%%%%%%%%%%%
	\subsection{Gain function}

Given $z=(t,x) \in \Zb$ and $m\in \Mb$, the aim of the controller is to maximize the expected value of the gain functional
    \[
        \phi \in \Phi^{z,m} \mapsto {G}^{z,m}(\phi) := g(Z^{z,\phi}_{\T[\phi]}, M^{z,m,\phi}_{\T[\phi]}, \upsilon, \epsilon_{0}),
    \]
 in which $\T[\phi]$ is the end of the last action after $T$:
 $$
 \T[\phi]:=\sup\{\vartheta^{\phi}_{i}: i\ge 1,\; \tau^{\phi}_{i}\le T\}\vee T.
 $$   
 As suggested earlier,  the gain may not only depend on the value of the original time-space state process $Z^{z,\phi}_{\T[\phi]}$ but also on $M^{z,m,\phi}_{\T[\phi]}$, to model the fact that we are also interested by the precision of the estimation made on $\upsilon$ at the final time. One also allows for terminating the last action after $T$. However, since $g$ can depend on  $ \T[\phi]$ through $Z^{z,\phi}_{\T[\phi]}$, one can penalize the actions that actually terminates strictly after $T$. 
 \vs2
 
Hereafter, the function $g$ is assumed to be measurable and  bounded\footnote{Boundedness is just for sake of simplicity. Much more general frameworks could easily be considered.} on $\Zb \times \Mb \times \U\times \Ec$. 
\vs2

Given $\phi \in \Phi^{z,m}$, the expected gain is 
    \[
        J(z, m; \phi) := \mathbb{E}_{m}\left[ G^{z,m}(\phi)\right],
    \]
and 
    \begin{align}\label{eq: def vr}
        \vr(z,m) := \sup_{\phi \in \Phi^{z,m}}J(z, m; \phi)\1_{\{t\le T\}}+\1_{\{t> T\}}\mathbb{E}_{m}\left[ g(z, m, \upsilon, \epsilon_{0})\right] 
    \end{align}
   is the corresponding value function. Note that $\vr$ depends on $m$ through the set of admissible controls $ \Phi^{z,m}$ and the expectation operator $ \mathbb{E}_{m}$, even if $g$ does not depend on $M^{z,m,\phi}_{\T[\phi]}$. 
   
 \begin{Remark}  Note that a running gain term could be added without any difficulty. One usually reduces to a Mayer formulation by adding a component to the space process and by modifying the terminal reward accordingly. Here,   if this running gain  only covers the period $[0,T]$, it should be added explicitely because of the modified time horizon $\T[\phi]$ at which the terminal gain is computed. 
   \end{Remark}
 %%%%%%%%%%%%%%%%%%%%%%%%%%%%%%%%%%%%%%%%%%%%%%%%%
  \section{Value function characterization}

The aim of this section is to provide a characterization of the value function $\vr$. 
As usual, it should be related to a  dynamic programming principle. In our setting, it  corresponds to: Given $z=(t,x) \in \Zb$ and $m\in \Mb$, then 
	\begin{equation}\label{eq: DPP formal}
		\vr(z,m)= \sup_{\phi \in \Phi^{z,m}}\E_{m} [\vr(Z^{z,\phi}_{\theta^{\phi}},M^{z,m,\phi}_{\theta^{\phi}})],
	\end{equation}
for all collection $(\theta^{\phi},\phi\in \Phi^{z,m})$  of $\F^{z,m,\phi}$-stopping times with values in $[t, 2T]$ such that 
$
\theta^{\phi}\in \Nc^{\phi}\cap [t,\T[\phi]]~\P_{m}-\as,
$
recall the definition of $\Nc^{\phi}$ in \reff{eq: def Nphi}. 

Let us comment this. First, one should restrict to stopping times such that $\theta^{\phi}\in \Nc^{\phi}$. The reason is that no new impulse can be made outside of $\Nc^{\phi}$, each interval $[\tau_{i}^{\phi},\vartheta^{\phi}_{i})$ is a latency period. Second, the terminal gain is evaluated at $\T[\phi]$, which in general is different from $T$. Hence, the fact that $\theta^{\phi}$ is only bounded by $\T[\phi]$. 

A partial version of \reff{eq: DPP formal} will be proved in Proposition \ref{prop: DPP} below and will be used to provide a sub-solution property. As  already mentioned in the introduction, we are not able to prove a full version \reff{eq: DPP formal}.  The reason is that the value function $\vr$ depends on $z=(t,x)\in \Zb$ and $m\in \Mb$ through the set of admissible controls $\Phi^{z,m}$, and more precisely through the choice of the filtration $\F^{z,m,\phi}$, which even depends on $\phi$ itself. This makes this dependence highly singular and we are neither in position to play with any a-priori smoothness, see e.g.~\cite{bouchard2011weak}, nor to apply a measurable selection theorem, see e.g.~\cite{dimitri1996stochastic}. 

\vs2
We continue our discussion, assuming that \reff{eq: DPP formal} holds and that $\vr$ is sufficiently smooth.
Then, it should in particular satisfy
$
\vr(z,m)\geq  \E_{m} [\vr(Z^{z,\circ}_{t+h},m)]
$ 
whenever $z=(t,x)\in [0,T)\x \R^{d}$ and $0<h\le T-t$ ($Z^{z,\circ}$ is defined after \reff{eq: def Z Zcirc}). This corresponds to the sub-optimality   of the control consisting in making no impulse on  $[t,t+h]$. Applying It\^{o}'s lemma, dividing by $h$ and letting $h$ go to $0$, we obtain 
$
-\Lc\vr(z,m)\ge 0 
$
in which  
 $\Lc$ is the Dynkin operator associated to $X^{z,\circ}$, 
 $$
 \Lc\vp:=\partial_{t}\vp  +\langle \mu, D\vp\rangle + \frac12 {\rm Tr}[\sigma\sigma^{\top} D^{2}\vp]. 
 $$
 On the other hand, it follows from \reff{eq: DPP formal} and Remark \ref{rem: joint condi distri ZM} that 
\begin{align}
 &\vr(z,m)\geq  \sup_{a\in \Ab}\E_{m} [\vr({\rm z}'[z,a,\upsilon,\epsilon_{1}],{\mathfrak M}(m;{\rm z}'[z,a,\upsilon,\epsilon_{1}],z,a)) ] = \Kc\vr(z,m)
\nonumber\\
&\mbox{where}\;\;\label{eq : def Kc}
\Kc \vp:=\sup_{a\in \Ab}\Kc^{a}\vp \;\;\mbox{ with } \;\;
\Kc^{a} \vp:= \int \vp(z',m')d\kr(z',m'|\cdot,a) \mbox{ for $a\in \Ab$.}
 \end{align}
 As for the time-$T$ boundary condition, the same reasoning as above implies 
 $
 \vr(T,\cdot)\ge \Kc_{T}g$ and $\vr(T,\cdot)\ge \Kc\vr(T,\cdot), 
 $
 in which 
\begin{align}\label{eq : def KcT}
 \Kc_{T}g(\cdot,m)=\int_{\U}\int_{\Ec} g(\cdot,m,u,e)d\P_{\epsilon}(e)dm(u).
\end{align} 
 
 By optimality, $\vr$ should therefore solve the quasi-variational equations
 \begin{align}
 \min\left\{-\Lc\vp\;,\;\vp-\Kc\vp\right\}=0 &\;\mbox{ on } [0,T)\x \R^{d}\x\Mb\label{eq: pde interior}\\
  \min\left\{\vp- \Kc_{T}g,\vp- \Kc \vp\right\}=0 &\;\mbox{ on } \{T\}\x \R^{d}\x\Mb,\label{eq: pde T} 
  \end{align}
  in the sense of the following definition (given for sake of clarity). 
  
  \begin{Definition} We say that a lower-semicontinuous function $U$ on $\R_{+}\x \R^{d}\x\Mb$ is a viscosity super-solution of \reff{eq: pde interior}-\reff{eq: pde T}  if for any $z_{\circ}=(t_{\circ},x_{\circ})\in \Zb$, $m_{\circ}\in \Mb$, and $\vp \in C^{1,2,0}([0,T]\x \R^{d}\x \Mb)$ such that  $\min_{\Zb\x \Mb}(U-\vp)$ $=$ $(U-\vp)(z_{\circ},m_{\circ})$ $=$ $0$ we have 
  $$
 \left[  \min\left\{-\Lc\vp\;,\;\vp-\Kc U\right\}\1_{\{t_{\circ}<T\}} + \min\left\{\vp- \Kc_{T}g,\vp- \Kc U\right\}\1_{\{t_{\circ}=T\}}\right](z_{\circ},m_{\circ})\ge 0.
  $$  
   We say that a upper-semicontinuous function $U$ on $\R_{+}\x \R^{d}\x \Mb$ is a viscosity sub-solution of \reff{eq: pde interior}-\reff{eq: pde T}  if for any $z_{\circ}=(t_{\circ},x_{\circ})\in \Zb$, $m_{\circ}\in \Mb$ and  $\vp \in C^{1,2,0}([0,T]\x \R^{d}\x \Mb)$ such that  $\max_{\Zb\x \Mb}(U-\vp)$ $=$ $(U-\vp)(z_{\circ},m_{\circ})$ $=$ $0$ we have
  $$
 \left[  \min\left\{-\Lc\vp\;,\;\vp-\Kc U\right\}\1_{\{t_{\circ}<T\}} + \min\left\{\vp- \Kc_{T}g,\vp- \Kc U\right\}\1_{\{t_{\circ}=T\}}\right](z_{\circ},m_{\circ})\le 0.
  $$  
We say that a continuous function $U$ on $\R_{+}\x \R^{d}\x\Mb$ is a viscosity solution of \reff{eq: pde interior}-\reff{eq: pde T} if it is a super- and a sub-solution.
  \end{Definition}
  
To ensure that the above operator is continuous, we assume from now on that, on  $\R_{+}\x \R^{d}\x \Mb$, 
\be\label{eq: hyp conti KcT et Kc}
\begin{array}{c}
\text{$\Kc_{T}g$ is continuous, and $\Kc\vp$ is upper- (resp.~lower-) semicontinuous,}\\
\text{for all   upper- (resp.~lower-) semicontinuous bounded function $\vp$.}
\end{array}
\ee

A sufficient condition for (\ref{eq: hyp conti KcT et Kc}) to hold is that $\kr$ defined in (\ref{eq: def nu}) is a continuous stochastic kernel, see \cite[Proposition 7.31 and 7.32 page 148]{dimitri1996stochastic}. 

Finally, we assume that comparison holds for \reff{eq: pde interior}-\reff{eq: pde T}.

\begin{Assumption}\label{ass: comp}
Let $U$ (resp.~$V$) be a upper- (resp.~lower-) semicontinuous bounded viscosity sub- (resp.~super-) solution of  \reff{eq: pde interior}-\reff{eq: pde T}. Assume further that $U \le V$ on $(T,\infty)\x \R^{d}\x \Mb$.
Then, $U \le V$ on $\Zb\x \Mb$.
\end{Assumption}

See Proposition \ref{comparaison} below for a sufficient condition. We are now in position to state the main result of this paper.    The proof is provided in the next section.  

\begin{Theorem}\label{thm:viscosity}
Let Assumption \ref{ass: comp}  (or the conditions of Proposition \ref{comparaison} below) hold. Then,  $\vr$ is continuous on $\Zb\x \Mb$ and is the unique bounded viscosity solution of \reff{eq: pde interior}-\reff{eq: pde T}.
\end{Theorem}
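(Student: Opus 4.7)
The plan is to prove the sub- and super-solution properties separately and then invoke comparison (Assumption \ref{ass: comp}) together with the trivial chain $\vr_* \leq \vr \leq \vr^*$ to conclude. The asymmetry comes from the fact that the filtration $\F^{z,m,\phi}$ depends on the initial data $(z,m)$ and on the control $\phi$ itself, so the standard measurable-selection route to the full continuous-time DPP is blocked. Accordingly, the sub-solution property will be handled via the easy half of the DPP, while the super-solution property is obtained by approximating from below by a discrete-time control problem, where measurable selection is available, and then passing to the limit.

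\emph{Sub-solution of $\vr^*$.} First I would establish the partial DPP of Proposition \ref{prop: DPP},
\[
\vr(z,m) \leq \sup_{\phi \in \Phi^{z,m}} \E_m\!\left[ \vr^*(Z^{z,\phi}_{\theta^\phi}, M^{z,m,\phi}_{\theta^\phi}) \right],\quad \theta^\phi \in \Nc^\phi \cap [t,\T[\phi]],
\]
which reduces to pure conditioning: the restriction of any $\phi \in \Phi^{z,m}$ to $[\theta^\phi,\T[\phi]]$ is admissible from $(Z_{\theta^\phi}^{z,\phi}, M_{\theta^\phi}^{z,m,\phi})$ by the flow property of \reff{eq: dyna X} and of the Bayes dynamics \reff{eq: dyna m out of vartheta}-\reff{eq: dyna m}, and its conditional expected gain is bounded by $\vr^*$ there. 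Specialising to $\phi=$ immediate impulse at $t$ with action $a\in\Ab$ (so $(Z_{\theta^\phi}, M_{\theta^\phi})$ has law $\kr(\cdot|z,m,a)$, see Remark \ref{rem: joint condi distri ZM}) and supremising over $a$ gives $\vr \leq \Kc\vr^*$; specialising to $\phi=$ ``no impulse'' on $[t,t+h]$ and applying It\^o to a smooth test function $\vp$ touching $\vr^*$ from above at an interior $(z_\circ,m_\circ)$ gives $-\Lc\vp(z_\circ,m_\circ)\leq 0$. Using \reff{eq: hyp conti KcT et Kc} to upgrade $\vr \leq \Kc\vr^*$ to $\vr^* \leq \Kc\vr^*$ at $(z_\circ,m_\circ)$, and treating the terminal boundary $t_\circ=T$ by the same impulse argument together with $\vr(T,\cdot)\geq \Kc_T g$, then yields the sub-solution property.

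\emph{Super-solution via a discrete-time approximation.} For $n\geq 1$ set $\pi_n := \{kT/n : 0 \leq k \leq n\}$ and let $\vr_n$ be the value function obtained by restricting admissible impulse times $\tau^\phi_i$ to $\pi_n \cup (T,\infty)$. Fewer strategies are allowed, hence $\vr_n \leq \vr$ on $\Zb\x\Mb$. In this discrete-time model the controller's decision at each grid point $t_k$ is a genuine recursive optimisation over $\R^d\x\Mb$, and Bertsekas--Shreve-type measurable selection \cite{dimitri1996stochastic} yields the full DPP
\[
\vr_n(t_k, x, m) = \max\!\Bigl\{ \Kc\vr_n(t_k, x, m),\ \E_m\!\left[\vr_n(t_{k+1}, X^{(t_k,x),\circ}_{t_{k+1}}, m)\right] \Bigr\},\quad 0\leq k < n,
\]
together with the terminal relation at $t=T$; standard arguments then give that $\vr_{n,*}$ is a super-solution of the natural discrete-time analogue of \reff{eq: pde interior}-\reff{eq: pde T}. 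Setting
\[
\underline{\vr}(z,m) := \liminf_{n\to\infty,\,(z',m')\to(z,m)} \vr_n(z',m'),
\]
and using the continuity assumption \reff{eq: hyp conti KcT et Kc} to pass to the limit in the viscosity inequalities, one shows that $\underline{\vr}$ is a bounded lower-semicontinuous super-solution of \reff{eq: pde interior}-\reff{eq: pde T}.

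\emph{Conclusion and main obstacle.} From $\vr_n \leq \vr$ we get $\underline{\vr} \leq \vr_*$, and in particular $\underline{\vr} = \vr^*$ on $(T,\infty)\x\R^d\x\Mb$ since both reduce to $\E_m[g(z,m,\upsilon,\epsilon_0)]$ there. Assumption \ref{ass: comp} then forces $\vr^* \leq \underline{\vr}$ on $\Zb\x\Mb$, so $\vr^* \leq \underline{\vr} \leq \vr_* \leq \vr \leq \vr^*$ and all four functions coincide, yielding continuity of $\vr$, its viscosity-solution property, and uniqueness. The hardest step will be the passage to the limit $n\to\infty$ in the super-solution property: the delay time $\vartheta^\phi_i = \varpi(\tau^\phi_i, X^\phi_{\tau^\phi_i-}, \alpha^\phi_i, \upsilon, \epsilon_i)$ is random and generically off the grid $\pi_n$, so one has to carry $\vr_n$ at off-grid times, exploit the standing assumption ${\mathfrak M}(\Mb;\cdot)\subset\Mb$ and the local compactness of $\Mb$ to keep Bayes-updated priors in the admissible space, and verify that the discrete-time viscosity inequalities survive the semicontinuous relaxation before comparison can be applied.
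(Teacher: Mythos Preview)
Your overall architecture is exactly the paper's: partial DPP by conditioning for the sub-solution property of $\vr^{*}$, a discrete-time approximation $\vr_{n}\le \vr$ on a grid $\pi_{n}$ for which the full DPP holds via measurable selection \`a la Bertsekas--Shreve, passage to the relaxed lower limit $\underline\vr=\vr_{\circ}$ to obtain a super-solution of the continuous-time equation, and then comparison to sandwich $\vr^{*}\le \vr_{\circ}\le \vr_{*}\le \vr\le \vr^{*}$.

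One point needs correcting in your sub-solution sketch. From the partial DPP $\vr(z,m)\le \sup_{\phi}\E_{m}[\,\cdots\,]$ you cannot ``specialise $\phi$'' to an immediate impulse (resp.~to no impulse) and deduce $\vr\le \Kc\vr^{*}$ (resp.~$-\Lc\vp\le 0$): picking a particular $\phi$ only bounds the supremum from below, not $\vr$ from above. The paper's argument (Proposition \ref{prop: subsol}) proceeds by contradiction: assuming both $-\Lc\vp>0$ and $\vp-\Kc\vr^{*}>0$ on a neighbourhood of the test point, one shows that for \emph{every} admissible $\phi$ the quantity inside the supremum is $\le \vr(z,m)-\eta/2$, which is what contradicts the partial DPP. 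Also, the paper does not pass through a ``discrete-time analogue'' of \reff{eq: pde interior}-\reff{eq: pde T} for $\vr_{n,*}$; it uses the DPP for $\vr_{n}$ directly to derive the viscosity super-solution inequalities for $\vr_{\circ}$ in the limit (the off-grid latency being handled by evaluating $\vr_{n}$ at the next grid time $s^{n,a}_{+}$ and a separate argument that $\liminf$ of these expectations dominates $\vr_{\circ}$).
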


\begin{Remark} {We do not discuss here the issue of existence of an optimal control. We refer to the application paper \cite{BBD16extended} for an example of numerical scheme allowing to construct approximately optimal controls. Note also that the construction of Section \ref{sec: proof DPP vrn} below produces an almost optimal control as the arguments of Section \ref{sec: conc proof super sol} show that the sequence of value functions  $(\vr_{n})_{n\ge 1}$ actually converges to $\vr$. }
\end{Remark}
 %%%%%%%%%%%%%%%%%%%%%%%%%%%%%%%%%%%%%%%%%%%%%%%
%%%%%%%%%%%%%%%%%%%%%%%%%%%%%%%%%%%%%%%%%%%%%%%
    \section{Viscosity solution properties}\label{sec: proof disco}

This part is dedicated to the proof of the viscosity solution characterization of Theorem \ref{thm:viscosity}. 
We start with the sub-solution property, which is the more classical part. As for the super-solution property, we shall later on introduce a discrete time version of the model that will provide a natural lower bound. We will then show   that the sequence of corresponding value functions converges to a super-solution of our quasi-variational equation as  the time step goes to $0$. By comparison, we will finally identify this (limit) lower bound to the original value function, thus showing that the later is also a super-solution.   
%%%%%%%%%%%%%%%%%%%%%%%%%%%%%%%%%%%%%%%%%%%%
\subsection{Sub-solution property}

We start with the sub-solution property and show that it is satisfied by the upper-semicontinuous enveloppe  of $\vr$ defined in  \reff{eq: def vr}: 
$$
\vr^{*}(z,m):=\limsup_{(z',m')\to (z,m)} \vr(z',m')\;\;,\;\;(z,m)\in  \R_{+}\x \R^{d}\x \Mb.
$$

\begin{Proposition}\label{prop: subsol} $\vr^{*}$ is a viscosity subsolution of \reff{eq: pde interior}-\reff{eq: pde T}.
\end{Proposition}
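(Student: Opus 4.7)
The approach rests on the sub-dynamic programming principle of Proposition~\ref{prop: DPP}, which will provide the inequality $\vr(z,m) \le \sup_{\phi \in \Phi^{z,m}} \E_m[\vr(Z^{z,\phi}_{\theta^\phi}, M^{z,m,\phi}_{\theta^\phi})]$ for every admissible family $(\theta^\phi)$ of $\F^{z,m,\phi}$-stopping times valued in $\Nc^\phi \cap [t, \T[\phi]]$. From this, the viscosity subsolution property will be obtained by a classical test function argument by contradiction, adapted to the impulse/filtering setting. The bookkeeping has to be performed around the first impulse and uses in a crucial way the joint conditional law of $(Z^{z,\phi}_{\vartheta_i^\phi}, M^{z,m,\phi}_{\vartheta_i^\phi})$ from Remark~\ref{rem: joint condi distri ZM}.

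For the interior case $t_\circ < T$, take $\vp \in C^{1,2,0}$ such that $(\vr^* - \vp)(z_\circ, m_\circ) = 0 = \max(\vr^* - \vp)$. Subtracting $|z - z_\circ|^4 + d(m, m_\circ)^4$ from $\vp$ if needed, one may assume the contact is strict off $(z_\circ, m_\circ)$. Suppose, toward contradiction, that $-\Lc\vp(z_\circ, m_\circ) > 0$ and $(\vp - \Kc\vr^*)(z_\circ, m_\circ) > 0$. By continuity of $\Lc\vp$ and the upper-semicontinuity of both $\Kc\vr^*$ (granted by \reff{eq: hyp conti KcT et Kc}) and $\vr^*$, one can pick $\eta > 0$ and an open ball $B \subset \{t<T\} \x \R^d \x \Mb$ centered at $(z_\circ, m_\circ)$ on which $-\Lc\vp \ge \eta$ and $\vp - \Kc\vr^* \ge \eta$, and whose boundary satisfies $\vp \ge \vr^* + \eta$.

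Select $(z_n, m_n) \to (z_\circ, m_\circ)$ with $\vr(z_n, m_n) \to \vr^*(z_\circ, m_\circ)$ and, via the sub-DPP, a control $\phi_n \in \Phi^{z_n, m_n}$ that is $\tfrac1n$-optimal for the stopping time
\[
\theta_n := (\theta^{\rm exit}_n \wedge T)\1_{\{\tau_1^{\phi_n} > \theta^{\rm exit}_n \wedge T\}} + \vartheta_1^{\phi_n}\1_{\{\tau_1^{\phi_n} \le \theta^{\rm exit}_n \wedge T\}},
\]
where $\theta^{\rm exit}_n$ is the exit time of the continuous flow $Z^{z_n,\circ}$ from the $z$-projection of $B$. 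By construction $\theta_n \in \Nc^{\phi_n} \cap [t_n, \T[\phi_n]]$: either one stops in the continuous phase before any impulse, or one jumps to the post-impulse endpoint $\vartheta_1^{\phi_n}$. On $\{\tau_1^{\phi_n} \le \theta^{\rm exit}_n \wedge T\}$, conditioning on $\Fc^{z_n, m_n, \phi_n}_{\tau_1^{\phi_n}-}$ and applying Remark~\ref{rem: joint condi distri ZM} together with $\vr \le \vr^*$ gives $\E_{m_n}[\vr(Z^{z_n,\phi_n}_{\vartheta_1^{\phi_n}}, M^{z_n,m_n,\phi_n}_{\vartheta_1^{\phi_n}}) \mid \Fc^{z_n,m_n,\phi_n}_{\tau_1^{\phi_n}-}] \le \Kc\vr^*(Z^{z_n,\circ}_{\tau_1^{\phi_n}}, m_n) \le (\vp - \eta)(Z^{z_n,\circ}_{\tau_1^{\phi_n}}, m_n)$. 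On the complementary event, $(Z^{z_n,\circ}_{\theta_n}, m_n) \in \partial B$ so $\vr \le \vp - \eta$ there. Applying It\^o to $\vp(\cdot, Z^{z_n,\circ}, m_n)$ on $[t_n, \tau_1^{\phi_n} \wedge \theta^{\rm exit}_n \wedge T]$ (where the prior is frozen at $m_n$ and $Z^{z_n,\phi_n} = Z^{z_n,\circ}$), and invoking $-\Lc\vp \ge \eta$, yields $\vr(z_n, m_n) - \tfrac1n \le \vp(z_n, m_n) - \eta$. Letting $n \to \infty$ contradicts $\vp(z_\circ, m_\circ) = \vr^*(z_\circ, m_\circ)$.

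The terminal case $t_\circ = T$ splits into two sub-cases. If the approximating sequence can be chosen with $t_n > T$, the definition of $\vr$ in~\reff{eq: def vr} gives $\vr(z_n, m_n) = \Kc_T g(z_n, m_n)$ directly, and the continuity of $\Kc_T g$ from~\reff{eq: hyp conti KcT et Kc} forces $\vr^*(T, x_\circ, m_\circ) \le \Kc_T g(T, x_\circ, m_\circ)$, that is, $\vp - \Kc_T g \le 0$ at the contact point. Otherwise $t_n \le T$ and the interior argument above is replayed on the shrinking interval $[t_n, T]$: the diffusive contribution is $o(1)$ as $t_n \to T$, while the first-impulse bound $\Kc\vr^* \le \vp - \eta$ still survives and forces the same contradiction. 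The main obstacle throughout is the control of the value across the first impulse, which is exactly enabled by the joint conditional law~\reff{eq: joint cond law X,M} established in Remark~\ref{rem: joint condi distri ZM}.
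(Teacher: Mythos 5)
Your proof follows essentially the same route as the paper: argue by contradiction with a strictly perturbed test function, work on a small ball around the contact point, stop at the earlier of the exit time of the uncontrolled flow and the completion $\vartheta^{\phi}_{1}$ of the first impulse, bound the post-impulse value through the conditional kernel of Remark \ref{rem: joint condi distri ZM}, use It\^o on the continuous segment, and contradict the partial DPP. Two points of rigor should be fixed, though both are repairable with the paper's own tools. First, you invoke Proposition \ref{prop: DPP} as if it gave $\vr(z,m)\le \sup_{\phi}\E_{m}[\vr(Z^{z,\phi}_{\theta^{\phi}},M^{z,m,\phi}_{\theta^{\phi}})]$ for \emph{every} admissible family of stopping times valued in $\Nc^{\phi}\cap[t,\T[\phi]]$; that is the unproven inequality \reff{eq: DPP formal} (and with $\vr$ itself inside the expectation one even faces a measurability problem, since $\vr$ is not known to be Borel). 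What Proposition \ref{prop: DPP} actually provides is the restricted bound with $\theta$ the exit time of $Z^{z,\circ}$ from a Borel set, with $f=\vr^{*}\1_{\{t'<T\}}+\Kc_{T}g\1_{\{t'\ge T\}}$ in place of $\vr$, and with $\Kc^{\alpha^{\phi}_{1}}f$ evaluated at the pre-impulse state on $\{\theta\ge\tau^{\phi}_{1}\}$. Fortunately your stopping time $\theta_{n}$ has exactly this structure and you immediately pass to $\vr\le\vr^{*}$ and to $\Kc\vr^{*}$ at the pre-impulse point, so the argument goes through once you quote (or re-derive) the DPP in this weaker form. Second, the terminal case $t_{\circ}=T$ is under-argued: besides the sub-case where the contact value is approached through $t\ge T$, on the event where neither an impulse nor an exit occurs before $T$ you must bound the value at time $T$ inside the ball, which requires the branch $\vp\ge\Kc_{T}g+\eta$ of the contradiction hypothesis (never stated explicitly in your sketch); the paper secures the Lagrangian part by adding $C(T-t')$ to the test function, while your ``diffusive contribution is $o(1)$'' remark can substitute for that, but the $\Kc_{T}g$ term cannot be dispensed with by the first-impulse bound alone.
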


The proof  is rather standard. As usual, it is based on the  partial dynamic programming principle contained in Proposition \ref{prop: DPP} below,  that can be established by adapting  standard   lines of arguments, see e.g.~\cite{bouchard2011weak}. For this part, the dependency of the filtration on the initial data is not problematic as it only requires a conditioning argument. 
  Before to state it, let us make an observation.

 \begin{Remark}\label{rem : conditioning} Note that, given $z=(t,x)\in \Zb$, the process $X^{z,\circ}$  defined in \reff{eq: def Z Zcirc} is predictable with respect to the $\P$-augmentation of the  raw filtration $\F^{t,W}$ generated by $(W_{\cdot\vee t}-W_{t})$. By  \cite[Lemma 7, Appendix I]{DellacherieMeyer.82}, it is indistinguishable from a $\F^{t,W}$-predictable process. 
Using this identification, $X^{z,\circ}_{s}(\omega)=X^{z,\circ}_{s}(\omega^{t,s})$ for $s\ge t$, with $\omega^{t,s}:=\omega_{t\vee \cdot\wedge s}-\omega_{t}$. Similarly, $\tau^{\phi}_{1}$ and $\alpha^{\phi}_{1}$ can be identified to Borel measurable maps on $C([0,T];\R^{d})$ that depends only on $\omega^{t,\tau^{\phi}_{1}(\omega^{t,T})}$ so that $(Z^{z,\phi}_{\vartheta^{\phi}_{1}},M^{z,m,\phi}_{\vartheta^{\phi}_{1}})$ can be seen as a Borel map on  $C([0,T];\R^{d})\x \U\x \Ec$, while $(Z^{z,\phi}_{\tau^{\phi}_{1}-},M^{z,m,\phi}_{\tau^{\phi}_{1}-})$ can be seen as a Borel map on  $C([0,T];\R^{d})$ that only depends on $\omega^{t,\tau^{\phi}_{1}(\omega^{t,T})}$,  recall \reff{eq: flow prop X}, \reff{eq: dyna m out of vartheta} and \reff{eq: dyna m}. Iterating this argument, we also obtain that  $(Z^{z,\phi}_{\T[\phi]},M^{z,m,\phi}_{\T[\phi]})$ is equal, up to $\P_{m}$-null sets, to a  Borel map on $C([0,T];\R^{d})\x \U\x \Ec^{N}$, for some $N\ge 1$ that depends on $\phi$.   
 \end{Remark}

We use the notations introduced in \reff{eq: def Z Zcirc}, \reff{eq : def Kc} and \reff{eq : def KcT} in the following.
\begin{Proposition}\label{prop: DPP}
Fix $(z,m) \in \Zb \times \Mb$, and let  $\theta$ be the first exit time of $Z^{z,\circ}$  from a Borel set $B\subset \Zb$ containing $(z,m)$. Then,  
	\begin{align}\label{eq: ddp facile} 
		\vr(z,m)&\leq \sup_{\phi \in \Phi^{z,m}_{\ge t}}\E_{m} [f(Z^{z,\circ}_{\theta},m)\1_{\{\theta<\tau^{\phi}_{1}\}}+\Kc^{\alpha^{\phi}_{1}} f(Z^{z,\circ}_{\tau^{\phi}_{1}-},m)]\1_{\{\theta\ge \tau^{\phi}_{1}\}}]
	\end{align}
 in which $z:=(t,x)$, $\Phi^{z,m}_{\ge t}:=\{\phi \in \Phi^{z,m}: \tau^{\phi}_{1}\ge t\}$ and 
 \begin{align}\label{eq: def f}
 f(z',m'):=\vr^{*}(z',m')\1_{\{t'<T\}}+\Kc_{T}g(z',m')\1_{\{t'\ge T\}}
 \end{align}
 for $z'=(t',x')\in \Ab$ and $m'\in \Mb$.
\end{Proposition}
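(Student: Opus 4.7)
The plan is to establish \reff{eq: ddp facile} via a conditioning argument that splits the expected gain according to whether the first impulse has occurred by the exit time. Fix $\phi\in\Phi^{z,m}_{\ge t}$; then on $[t,\tau^{\phi}_1)$ no impulse has been performed, so $X^{z,\phi}=X^{z,\circ}$ there and $M^{z,m,\phi}\equiv m$. In particular $Z^{z,\phi}_{\tau^{\phi}_1-}=Z^{z,\circ}_{\tau^{\phi}_1-}$, and the event $\{\theta<\tau^{\phi}_1\}$ together with $Z^{z,\circ}_\theta$ are measurable with respect to the natural pre-$\tau^{\phi}_1$ sub-$\sigma$-algebra. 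I would write
$$
J(z,m;\phi)=\E_m\!\left[G^{z,m}(\phi)\1_{\{\theta<\tau^{\phi}_1\}}\right]+\E_m\!\left[G^{z,m}(\phi)\1_{\{\theta\ge\tau^{\phi}_1\}}\right]
$$
and treat each piece separately.

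On $\{\theta<\tau^{\phi}_1\}$, I would condition on $\Fc^{z,m,\phi}_{\theta}$. Using Remark \ref{rem : conditioning}, the ``tail'' of $\phi$ from $\theta$ onward can be represented as Borel maps of the post-$\theta$ Brownian increments and of $(\upsilon,(\epsilon_i)_{i\ge 1})$, which are independent of $\Fc^{z,m,\phi}_{\theta}$ under $\P_m$; after this identification, the tail defines, $\P_m$-a.s., an element of $\Phi^{Z^{z,\circ}_\theta,m}$. The definition of $\vr$ then bounds the conditional expectation above by $\vr(Z^{z,\circ}_\theta,m)\le\vr^*(Z^{z,\circ}_\theta,m)$ when the time component of $Z^{z,\circ}_\theta$ is $<T$, and reduces to $\Kc_T g(Z^{z,\circ}_\theta,m)$ when it equals $T$ (second branch of \reff{eq: def vr}); both cases combine into $f(Z^{z,\circ}_\theta,m)$. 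On $\{\theta\ge\tau^{\phi}_1\}$, I would instead condition on $\Fc^{z,m,\phi}_{\tau^{\phi}_1-}$: the portion of $\phi$ after $\vartheta^{\phi}_1$ is, by the same identification, admissible for the shifted initial data $(Z^{z,\phi}_{\vartheta^{\phi}_1},M^{z,m,\phi}_{\vartheta^{\phi}_1})$, so the conditional terminal payoff is dominated by $f(Z^{z,\phi}_{\vartheta^{\phi}_1},M^{z,m,\phi}_{\vartheta^{\phi}_1})$. Integrating out the $(\upsilon,\epsilon_1)$-randomness through the kernel $\kr$ of \reff{eq: joint cond law X,M}, combined with the identities $Z^{z,\phi}_{\tau^{\phi}_1-}=Z^{z,\circ}_{\tau^{\phi}_1-}$ and $M^{z,m,\phi}_{\tau^{\phi}_1-}=m$, yields exactly the $\Kc^{\alpha^{\phi}_1}f(Z^{z,\circ}_{\tau^{\phi}_1-},m)$ term. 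Summing the two contributions and taking the supremum over $\phi\in\Phi^{z,m}_{\ge t}$ produces \reff{eq: ddp facile}.

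The main obstacle is purely technical. Since the admissibility class $\Phi^{z,m}$ is defined through the filtration $\F^{z,m,\phi}$ which itself depends on $(z,m)$ \emph{and} on $\phi$, it is not tautological that the ``shifted tail'' of a control lies in the admissibility class associated with the shifted initial data. One has to pull the $(\tau^{\phi}_i,\alpha^{\phi}_i,\vartheta^{\phi}_i)$ back to Borel maps on $C([0,T],\R^d)\x\U\x\Ec^{\N}$ as in Remark \ref{rem : conditioning}, apply a regular conditional probability of $\P_m$ given $\Fc^{z,m,\phi}_{\theta}$ (respectively $\Fc^{z,m,\phi}_{\tau^{\phi}_1-}$), and verify that the shifted maps satisfy the non-anticipativity constraints defining $\Phi^{Z^{z,\circ}_\theta,m}$; this follows the pattern of \cite{bouchard2011weak}. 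This same mechanism explains why only the ``$\le$'' direction is accessible here: the reverse bound would require a measurable-selection argument to splice almost-optimal \emph{shifted} strategies back into $\Phi^{z,m}$, which the strong control-dependence of the filtration prevents.
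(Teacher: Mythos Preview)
Your proposal is correct and follows essentially the same conditioning strategy as the paper's proof: split on $\{\theta<\tau^{\phi}_{1}\}$ versus $\{\theta\ge\tau^{\phi}_{1}\}$, identify the shifted tail of $\phi$ as an admissible control for the shifted initial data via Remark~\ref{rem : conditioning}, and bound the conditional gain by $f$ (resp.\ by $\Kc^{\alpha^{\phi}_{1}}f$ through the kernel $\kr$). The one device the paper adds, which you leave implicit, is a preliminary reduction to \emph{deterministic} $(\theta,\tau^{\phi}_{1})\equiv(s,s')$: one right-projects both times onto a dyadic grid, uses the right-continuity of $(Z^{z,\phi},M^{z,m,\phi})$ together with the upper-semicontinuity of $f$ and $\Kc f$ from \reff{eq: hyp conti KcT et Kc} to pass to the limit, and then conditions on the finitely many grid values; this is precisely what turns the regular-conditional-probability step at a random stopping time into a clean path-shift at a fixed $s$, and thereby resolves the measurability verification you flag in your last paragraph.
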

\proof Let $N\ge 1$ be such that $\tau^{\phi}_{i}>T$ for $i\ge N$. By right continuity of $(Z^{z,\phi},M^{z,m,\phi})$ and upper-semicontinuity of $f$ and $\Kc f$ on $[0,T)\x \R^{d}\x \Mb$, see \reff{eq: hyp conti KcT et Kc}, it suffices to prove the result for the projections on the right of $\theta$ and $\tau^{\phi}_{1}$ on a deterministic time grid. Then, it is enough to consider the case where $(\theta,\tau^{\phi}_{1})\equiv (s,s')\in [t,T]^{2}$, by arguing as below and conditioning by the values taken by $(\theta,\tau^{\phi}_{1})$ on the grid. In the following, we use regular conditional expectation operators. We shall make use of Remark \ref{rem : conditioning}. In particular, we write $\phi(\omega,u,(e_{i})_{i\le N})$ to denote the Borel map 
 $(\omega,u,(e_{i})_{i\le N})\in C([0,T];\R^{d})\x \U\x \Ec^{N}$ $\mapsto$ $\{(\tau^{\phi}_{i},\alpha^{\phi}_{i})(\omega^{t,T},u,(e_{j})_{j\le i-1}),$ $i\le N\}$ associated to $\phi$. 
If $s<s'$, we have $\P_{m}$-$\as$  
 \begin{align*}
 \E_{m}[G^{z,m}(\phi) | \Fc^{z,m,\phi}_{s}] (\omega,u,(e_{i})_{i\ge 1})
 &=
  \E_{m}[G^{Z^{z,\circ}_{s}(\omega^{t,s}),m}(\phi_{\omega^{t,s}})]
 \\&=
 \E_{m}[\Kc_{T}g(X^{Z^{z,\circ}_{s}(\omega^{t,s}),\phi_{\omega^{t,s}}}_{T},M^{Z^{z,\circ}_{s}(\omega^{t,s}),m,\phi_{\omega^{t,s}}}_{T}) ]
 \end{align*}
 in which $\Kc_{T}$ is defined in \reff{eq : def KcT} and 
 $$
 \phi_{\omega^{t,s}}:(\omega',u,(e_{i})_{i\le N})\in C([s,T];\R^{d})\x \U\x \Ec^{N} \mapsto \phi(\omega^{t,s}+\omega'_{\cdot\vee s}-\omega'_{s},u,(e_{i})_{i\le N})
 $$
 is an element of $\Phi^{Z^{z,\circ}_{s}(\omega^{t,s}),m,\phi_{\omega^{t,s}}}$. It follows that 
 $
  \E_{m}[G^{z,m}(\phi) | \Fc^{z,m,\phi}_{s}] \1_{s<s'}
  \le f(Z^{z,\circ}_{s},m)\1_{s<s'}$ $\P_{m}-\as
 $
Similarly, if $s\ge s'$, we have $\P_{m}$-$\as$  
 \begin{align*}
 \E_{m}[G^{z,m}(\phi) | \Fc^{z,m,\phi}_{s'-}] (\omega,u,(e_{i})_{i\le N})
 &=
  \E_{m}[G^{\xi(\omega^{t,s'},\upsilon,\epsilon_{1},\alpha^{\phi}_{1}(\omega^{t,s'}))}(\phi_{\omega^{t,s'}})]
 \end{align*}
with 
$$
\xi(\omega^{t,s'},\upsilon,\epsilon_{1},\alpha^{\phi}_{1}(\omega^{t,s'}))=\left(\cdot,
{\mathfrak M}(m;\cdot,Z^{z,\circ}_{s'-}(\omega^{t,s'}),\alpha^{\phi}_{1}(\omega^{t,s'}))\right)\circ\zr'(Z^{z,\circ}_{s'-}(\omega^{t,s'}),\alpha_{1}^{\phi}(\omega^{t,s'}),\upsilon,\epsilon_{1}),
$$
recall the notations in \reff{eq: flow prop X} and \reff{eq: def M mathfrak}.
Hence,  $\P_{m}$-$\as$, 
\begin{align*}
 \E_{m}[G^{z,m}(\phi) | \Fc^{z,m,\phi}_{s'-}] (\omega,u,(e_{i})_{i\le N})
 &\le
  \E_{m}[f(\xi(\omega^{t,s'},\upsilon,\epsilon_{1},\alpha^{\phi}_{1}(\omega^{t,s'})))]
  =   \Kc^{\alpha^{\phi}_{1}(\omega^{t,s'})} f(Z^{z,\circ}_{s'-}(\omega^{t,s'}),m),
 \end{align*} 
 in which $a\in \Ab\mapsto \Kc^{a}$ is defined in \reff{eq : def Kc}.
\ep

\vs2
 
\noindent{\bf Proof of Proposition \ref{prop: subsol}} As already mentioned, the proof is standard, we provide it for completeness. Let $  \vp$ be a (bounded) $C^{1,2,0}$ function and fix $(z_{\circ},m_{\circ})\in \Zb\x\Mb$ such that 
\begin{align}\label{eq: max strict}
0=(\vr^{*}-  \vp)(z_{\circ},m_{\circ})=\max_{\Zb\x \Mb}(\vr^{*}-  \vp).
\end{align}

We use the notation   $z_{\circ}=(t_{\circ},x_{\circ})\in [0,T]\x \R^{d}$. 

Step 1. We first assume that $t_{\circ}<T$. 
Let us suppose   that
    $
        \min\left\{- \Lc\vp  \;,\; \varphi-\Kc\vr^{*}\right\}(z_{\circ},m_{\circ}) > 0, 
    $
    and work towards a contradiction to Proposition \ref{prop: DPP}. Let ${\rm d}_{\Mb}$ be a metric compatible with the weak topology and let $\|\cdot\|_{\Zb}$ be the Euclidean norm on $\Zb$. We define 
$$
\bar \vp(z',m'):= \vp(z',m')+ \|z'-z_{\circ}\|^{4}_{\Zb}+{\rm d}_{\Mb}(m',m_{\circ}) .
$$
If the above holds, then
    $
        \min\left\{- \Lc\bar \vp  \;,\; \bar \varphi-\Kc\vr^{*}\right\}(z_{\circ},m_{\circ}) > 0.
    $
By our continuity assumption \reff{eq: hyp conti KcT et Kc}, we can find $\iota,\eta>0$, such that  
    \begin{align}\label{eq: proof sous sol strict sur sol}
        \min\left\{- \Lc\bar \vp  \;,\; \bar \varphi-\Kc\vr^{*}\right\} \ge \eta\;\;\mbox{ on } B_{\iota}, 
    \end{align}
    in which 
    $$
    B_{\iota}:=\{(z',m')\in \Zb\x \Mb: \|z'-z_{\circ}\|^{4}_{\Zb}+{\rm d}_{\Mb}(m',m_{\circ})< \iota\}\subset [0,T)\x \R^{d}\x \Mb.
    $$
 Note that, after possibly changing $\eta>0$, we can assume that 
     \begin{align}\label{eq: max strict bord}
        (\vr^{*}-\bar \vp)\le-\eta \mbox{ on }   (B_{\iota})^{c}.
    \end{align} 
In the following, we let  $(z,m)\in B_{\iota}$ be such that 
    \begin{align}\label{eq: dist vr}
        |\vr(z,m)-\bar \vp(z,m)|\le  \eta/2,
    \end{align}
    recall \reff{eq: max strict}. As above, we write $z=(t,x)\in [0,T]\x \R^{d}$. 
  Fix $\phi\in \Phi^{z,m}$. We write $(\tau_{i},\alpha_{i},\vartheta_{i})_{i\ge1}$, $Z$ and $M$ for  
  $(\tau^{\phi}_{i},\alpha^{\phi}_{i},\vartheta^{\phi}_{i})_{i\ge1}$, $Z^{z,\phi}$ and $M^{z,m,\phi}$.  Let $\theta$ be the first time when $(Z,M)$ exits $B_{\iota}$.
Without loss of generality, one can assume that $\tau_{1}\ge t$. Define 
$
\chi:=\theta\1_{\{\theta<\tau_{1}\}}+\1_{\{\theta\ge \tau_{1}\}}\vartheta_{1}.
$
In view of \reff{eq: proof sous sol strict sur sol}, \reff{eq: max strict bord} and \reff{eq: dist vr},  
 \begin{align*}
\E_{m}[\vr^{*}(Z_{\chi},M_{\chi})]&=  \E_{m}[\vr^{*}(Z_{\vartheta_{1}},M_{\vartheta_{1}})\1_{\{\chi\ne \theta\}}+\vr^{*}(Z_{\theta},M_{\theta})\1_{\{\chi=\theta\}}]
\\
&\le  \E_{m}[\Kc\vr^{*}(Z_{\tau_{1}-},M_{\tau_{1}-})\1_{\{\chi\ne \theta\}}+\vr^{*}(Z_{\theta},M_{\theta})\1_{\{\chi= \theta\}}]\\
&\le \E_{m}[\bar \vp(Z_{\theta \wedge \tau_{1}-},M_{\theta \wedge \tau_{1}-})]-\eta\\
&\le \bar  \vp(z,m)-\eta\\
&\le \vr(z,m)-\eta/2.
 \end{align*}
Since $\chi<T$, this  contradicts Proposition \ref{prop: DPP} by arbitrariness of $\phi$.

Step 2. We now consider the case $t_{\circ}=T$. We assume that 
    $
        \min\left\{\varphi-\Kc\vr^{*}  \;,\; \varphi-\Kc_{T}g\right\}(z_{\circ},m_{\circ}) > 0, 
    $
    and work toward a contradiction. 
     Let us  define 
$$
\bar \vp(t',x',m'):=\bar \vp(t',x',m')+C(T-t')+ \|(t',x')-z_{\circ}\|^{4}_{\Zb}+{\rm d}_{\Mb}(m',m_{\circ})
$$
and note that, for $C$ large enough, 
    $
        \min\left\{- \Lc\bar \vp  \;,\;\bar \varphi-\Kc\vr^{*}  \;,\; \bar \varphi-\Kc_{T} g\right\}(z_{\circ},m_{\circ}) > 0.
    $
    Then, as in Step 1, we can find $\iota,\eta>0$, such that  
    \begin{align*}
       \min\left\{- \Lc\bar \vp  \;,\;\bar \varphi-\Kc\vr^{*}  \;,\; \bar \varphi-\Kc_{T} g\right\} \ge \eta\;\;\mbox{ on } B_{\iota}, 
    \end{align*}
    in which 
    $$
    B_{\iota}:=\{(t',x',m')\in (T-\iota,T]\x \Mb: \|x'-x_{\circ}\|^{4}_{\R^{d}}+{\rm d}_{\Mb}(m',m_{\circ})< \iota\}.
    $$
    After possibly changing $\eta>0$, one can assume that 
     \begin{align*}
      (\vr^{*}-\bar \vp)\le-\eta \mbox{ on }   (B_{\iota})^{c}.
    \end{align*} 
 Let $(t,x,m)\in B_{\iota}$ be such that  
    \begin{align*}
        |\vr(t,x,m)-\bar \vp(t,x,m)|\le  \eta/2.
    \end{align*}
    One can assume that $t<T$. Otherwise, this would mean that 
    \begin{align*}
    \vr^{*}(z_{\circ},m_{\circ})=\limsup_{(T,x',m')\to (z_{\circ},m_{\circ})}\vr(T,x',m')=\limsup_{(T,x',m')\to (z_{\circ},m_{\circ})}\Kc_{T} (T,x',m')=\Kc_{T} g(z_{\circ},m_{\circ}),
    \end{align*}
    recall \reff{eq: hyp conti KcT et Kc},  and there is nothing to prove.     
    Given $\phi\in \Phi^{z,m}$, with $z:=(t,x)$, let $(\tau_{1},\vartheta_{1},Z=(\cdot,X),M)$ be defined as in Step 1 with respect to $\phi$ and $(z,m)$, and  consider 
  $
\chi:=\theta\1_{\{\theta<\tau_{1}\}}+\1_{\{\theta\ge \tau_{1}\}}\vartheta_{1},
$
where $\theta$ is the first exit time of $(X,M)$ from $\{(x',m')\in \R^{d}\x \Mb: \|x'-x_{\circ}\|^{4}_{\R^{d}}+{\rm d}_{\Mb}(m',m_{\circ})< \iota\}$. As in Step 1, the above implies that 
 $
\E_{m}[\vr^{*}(Z_{\chi},M_{\chi})]\le   \vr(z,m)-\eta/2,
$
which  contradicts Proposition \ref{prop: DPP} by arbitrariness of $\phi$.
\ep

\subsection{Discrete time approximation and dynamic programming}\label{sec: proof DPP vrn}

In this part, we prepare for the proof of the super-solution property.   As already mentioned above, we could not provide the opposite inequality in \reff{eq: ddp facile}, with $\vr^{*}$ replaced by the lower-semicontinuous envelope of $\vr$, because of the non-trivial dependence of  $\F^{z,m,\phi}$ with respect to the initial data. 
 Instead, we use the natural idea of approximating our continuous time control problem by a sequence of discrete time counterparts defined on a sequence of time grids. In discrete time, the dynamic programming principle can be proved along the lines of \cite{dimitri1996stochastic} for the corresponding value functions $(\vr_{n})_{n\ge 1}$. Passing to the limit as the time mesh vanishes provides a super-solution $\vr_{\circ}$ of \reff{eq: pde interior}-\reff{eq: pde T}. As $\vr^{*}$ is a sub-solution of the same equation,   Assumption \ref{ass: comp}  will imply that $\vr_{\circ}\ge \vr^{*}$, while the opposite will hold by construction. Then, we will conclude that $\vr$ is a actually a super-solution, and is even continuous.  This approach is similar to the one used in \cite{fleming1989existence} in the context of differential games.

\vs2

We first construct the sequence of discrete time optimal control problems. For $n\ge 1$,  let $\pi_{n} := \{t_{j}^{n}, j \leq 2^{n}\}$ with $t_{j}^{n} := jT/2^{n}$, and let $\Phi_{n}^{z,m}$ be the set of controls $\phi=(\tau_{i}^{\phi},\alpha_{i}^{\phi})_{i\ge 1}$ in $\Phi^{z,m}$ such that $(\tau_{i}^{\phi})_{i\ge 1}$ takes values in $\pi_{n}\cup\{t\}\cup [T,\infty)$, if $z=(t,x)$. The corresponding value function is 
\[
    \vr_{n}(z, m) = \sup_{\phi \in \Phi^{z,m}_{n}}J(z,m,\phi), \; \; (z,m)\in \Zb\x \Mb.
\]
We extend $\vr_{n}$ by setting 
\begin{align}\label{eq: extension vrn}
 \vr_{n} :=\Kc_{T}g , \; \;\mbox{ on } (T,\infty)\x \Mb,
 \end{align}

\begin{Remark}\label{rem: vrn le vr} Note that $\vr_{n} \le \vr\le \vr^{*}$ by construction.  
\end{Remark}

We first prove that $\vr_{n}$ satisfies a dynamic programming principle. This requires additional notations. We first define the next time on the grid at which a new action can be made, given that $a$ is plaid: 
$$
s^{n,a}[t,x]:=\min\{s\in \pi_{n}\cup[T,\infty): s\ge  \varpi(t,x,a, \upsilon, \epsilon_{j})\mbox{ and } s>t\} .
$$
Let $\partial$ denote a cemetery point that does not belong to $\Ab$. Given $a\in \Ab\cup\{\partial\}$, we make a slight abuse of notation by denoting by  $(Z^{(t,x),a},M^{(t,x),m,a})$ the process defined as   $(Z^{(t,x),\phi},M^{(t,x),m,\phi})$ for $\phi$ such that 
$$
(\tau^{\phi}_{1},\alpha^{\phi}_{1})= (t,a)\1_{\{a\ne \partial\}}+(T+1,a_{\star})\1_{\{a= \partial\}}
$$ 
in which $a_{\star}\in \Ab$ and $\tau^{\phi}_{i}>T+ 1$ for $i>1$. Then, we  set 
\begin{align*}
\bar J(T,\cdot;a):=\Kc_{T}\Kc^{a} g \;,\;\bar \vr_{n}(T,\cdot):=\sup_{a\in \Ab\cup \{\partial\}}\bar J(T,\cdot;a) \; \mbox{ on } \R^{d}\x \Mb\x (\Ab\cup \{\partial\}),\;\;
 \end{align*}
 with the convention that $\Kc^{\partial}$ is the identity, 
and define by backward induction on the intervals $[t^{n}_{j},T)$, $j=n-1,\cdots,0$, 
\begin{align*}
\bar J(z,m;a)&:=\E_{m}[\bar \vr_{n}(Z^{z,a}_{s^{n,a}[z]},M^{z,m,a}_{s^{n,a}[z]} )]
\;,\;
\bar \vr_{n}:= \sup_{a\in \Ab\cup \{\partial\}}\bar J(\cdot;a),
\end{align*}
together with the extension
$$
\bar \vr_{n}:=\Kc_{T}g \;\mbox{ on } (T,\infty)\x\R^{d}\x \Mb.
$$

\begin{Lemma}\label{lem : recu backward pour dpp} Fix $\iota>0$. Then, there  exists a universally measurable map $(z,m)\in \Zb\x \Mb \mapsto \hat a^{n,\iota}[z,m]\in \Ab\cup\{\partial\}$ such that 
$
\bar J(\cdot;\hat a^{n,\iota}[\cdot])\ge \bar \vr_{n}-\iota$ on $\Zb\x \Mb.
$
Moreover, the map $\bar \vr_{n}$ is upper semi-analytic.
\end{Lemma}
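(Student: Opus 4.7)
The plan is to proceed by backward induction on the grid $\pi_{n}$, invoking at each step the standard measurable selection apparatus for upper semi-analytic functions from \cite{dimitri1996stochastic} (Propositions 7.26, 7.48 and 7.50 therein). First I would observe that $\Ab\cup\{\partial\}$, with $\partial$ adjoined as an isolated point, is a compact Borel space, so the sup over it will be amenable to the selection theorem.

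For the base case at $t=T$, the map $(x,m,a)\mapsto \bar J(T,x,m;a)=\Kc_{T}\Kc^{a}g(x,m)$ is an iterated integral of the bounded Borel function $g$ with respect to the Borel stochastic kernels  $\kr(\cdot|\cdot,a)$ of \reff{eq: def nu} and the measures $d\P_{\epsilon}\otimes dm$, so it is Borel (hence upper semi-analytic) in $(x,m,a)$. Applying \cite[Proposition 7.50]{dimitri1996stochastic} then yields that $\bar \vr_{n}(T,\cdot)=\sup_{a}\bar J(T,\cdot;a)$ is upper semi-analytic and admits, for every $\iota>0$, an analytically (hence universally) measurable $\iota$-optimal selector $\hat a^{n,\iota}[T,\cdot]$.

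For the induction step, I assume $\bar \vr_{n}$ is upper semi-analytic on $[t^{n}_{j+1},T)\x\R^{d}\x\Mb$ (and Borel on $(T,\infty)\x\R^{d}\x\Mb$ by the extension \reff{eq: extension vrn}). For $(z,m,a)\in[t^{n}_{j},t^{n}_{j+1})\x\R^{d}\x\Mb\x(\Ab\cup\{\partial\})$, I would exhibit a Borel stochastic kernel $\kappa_{n}(dz',dm'|z,m,a)$ on $\Zb\x\Mb$ such that
\[
\bar J(z,m;a)=\int \bar \vr_{n}(z',m')\,\kappa_{n}(dz',dm'|z,m,a).
\]
When $a=\partial$, $\kappa_{n}$ is simply the law of $(Z^{z,\circ}_{t^{n}_{j+1}},m)$, which is a Borel kernel by the Lipschitz SDE theory. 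When $a\in\Ab$, $\kappa_{n}$ is the composition of: (i) the kernel $\kr(\cdot|z,m,a)$ of Remark \ref{rem: joint condi distri ZM}, which encodes the joint distribution of $(Z^{z,a}_{\vartheta_{1}},M^{z,m,a}_{\vartheta_{1}})$ and in particular of the post-delay time $\vartheta_{1}=\varpi(t,x,a,\upsilon,\epsilon_{1})$; followed by (ii) the deterministic projection $\vartheta_{1}\mapsto s^{n,a}[z]$ onto the grid; followed by (iii) the Borel SDE transition kernel from $\vartheta_{1}$ to $s^{n,a}[z]$ (with $M$ held constant on this latency interval by \reff{eq: dyna m out of vartheta}). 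Composition of Borel kernels is a Borel kernel (\cite[Proposition 7.26]{dimitri1996stochastic}), and integration of an upper semi-analytic function against a Borel kernel produces an upper semi-analytic function (\cite[Proposition 7.48]{dimitri1996stochastic}); hence $\bar J(\cdot;\cdot)$ is upper semi-analytic in $(z,m,a)$ on this strip. A second application of \cite[Proposition 7.50]{dimitri1996stochastic} gives upper semi-analyticity of $\bar \vr_{n}=\sup_{a}\bar J$ on $[t^{n}_{j},t^{n}_{j+1})\x\R^{d}\x\Mb$ and a universally measurable $\iota$-optimal selector $\hat a^{n,\iota}$ on that slab. Pasting these selectors over $j=0,\dots,2^{n}-1$ (and the terminal one at $T$) produces the claimed global universally measurable map, and the pieced-together function is universally measurable since each strip is Borel.

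The only real obstacle is the careful construction of the kernel $\kappa_{n}$ when $a\ne\partial$: one must check that the map $(z,m,a)\mapsto$ (joint law of $\vartheta_{1}$, $X^{z,a}_{\vartheta_{1}}$ and $M^{z,m,a}_{\vartheta_{1}}$) is Borel, which follows from measurability of $\varpi$ in \reff{eq: hype varpi} together with the explicit formula \reff{eq: def nu} for $\kr$, and that the subsequent free-evolution kernel from the (random) time $\vartheta_{1}$ up to the deterministic time $s^{n,a}[z]$ remains Borel in $(z,m,a,\vartheta_{1},X_{\vartheta_{1}})$ — a routine consequence of the flow property and Lipschitz dependence of the SDE on initial data. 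Everything else is a direct application of the Bertsekas–Shreve machinery.
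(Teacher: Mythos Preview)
Your proposal is correct and follows essentially the same route as the paper: backward induction over the grid $\pi_{n}$, invoking \cite[Proposition 7.48]{dimitri1996stochastic} to propagate upper semi-analyticity through the integration step and \cite[Proposition 7.50]{dimitri1996stochastic} to extract the $\iota$-optimal universally measurable selector at each stage. The paper's own proof is very terse—it simply observes that $\Kc_{T}g$ is continuous, cites these two propositions, and invokes backward induction—whereas you spell out in more detail the construction of the Borel stochastic kernel $\kappa_{n}$ underlying the expectation $\E_{m}[\bar\vr_{n}(Z^{z,a}_{s^{n,a}[z]},M^{z,m,a}_{s^{n,a}[z]})]$, distinguishing the cases $a=\partial$ and $a\in\Ab$; this extra care is implicit in the paper's citation of Proposition 7.48 but not written out.
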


\proof Since $\Kc_{T}g$ is assumed to be upper semi-analytic (indeed continuous), it follows from \cite[Proposition 7.48 page 180]{dimitri1996stochastic} that $\bar J$ is upper semi-analytic on $[t_{n-1}^{n},T]\x \R^{d}\x \Mb \x (\Ab\cup\{\partial\})$. Then, the required result  holds on $[t_{n-2}^{n},T]\x \R^{d}\x \Mb$ by \cite[Proposition 7.50 page 184]{dimitri1996stochastic}. It is then extended to $[0,T]\x \R^{d}\x \Mb$ by a backward induction. \ep

\begin{Proposition}\label{prop: bar vn = vn} $\bar \vr_{n}=\vr_{n}$ on $\Zb\x \Mb$. Moreover, given a random variable $(\zeta,\mu)$ with values in $\Zb\x \Mb$ and $\iota>0$, there exists a measurable map  $(z,m)\mapsto \phi^{\iota}[z,m]$ such that 
$$
  J(\zeta,\mu; \phi^{\iota}[\zeta,\mu])\ge   \vr_{n}(\zeta,\mu)-\iota~~\P_{m}-{\rm a.s.}
$$

\end{Proposition}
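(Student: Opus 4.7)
The proof is via a double inequality; the lower-bound construction simultaneously yields the universally measurable selector of the second assertion.

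\medskip

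\emph{Upper bound $\vr_n\le\bar\vr_n$.} Fix $z=(t,x)$, $m$, and $\phi\in\Phi_n^{z,m}$. Conditioning $J(z,m;\phi)$ successively at the impulse times and intermediate grid times, Remark~\ref{rem: joint condi distri ZM} identifies the conditional joint law of the next observed state $(Z^{z,\phi}_{s^{n,\alpha_i^\phi}[\cdot]},M^{z,m,\phi}_{s^{n,\alpha_i^\phi}[\cdot]})$, given the information at $\tau_i^\phi-$, as the kernel defining $\bar J(\cdot;\alpha_i^\phi)$; at grid times where no impulse is taken, the joint law of the state at the next grid time matches $\bar J(\cdot;\partial)$. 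A backward induction on $j$ over $\pi_n\cap[t,T]$, bounding each step by $\bar\vr_n=\sup_{a\in\Ab\cup\{\partial\}}\bar J(\cdot;a)$ and initialized by $\E_m[g(\cdot,\upsilon,\epsilon_0)\mid\Fc^{z,m,\phi}_T]=\Kc_T g\le\bar\vr_n(T,\cdot)$ (using the independence of $\epsilon_0$, the definition of $M^{z,m,\phi}_T$ as the conditional law of $\upsilon$, and Fubini), then yields $J(z,m;\phi)\le\bar\vr_n(z,m)$.

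\medskip

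\emph{Lower bound with measurable $\iota$-optimal control.} Fix $\iota>0$ and apply Lemma~\ref{lem : recu backward pour dpp} with precision $\iota/(2^n+1)$ to get a universally measurable selector $\hat a[\cdot]$ with $\bar J(\cdot;\hat a[\cdot])\ge\bar\vr_n-\iota/(2^n+1)$. Define $\phi^\iota[z,m]$ by forward recursion on the grid: set $(\zeta_0,\mu_0):=(z,m)$ with $t_{j_0}^n$ the smallest grid time $\ge t$, and at step $k$ compute $a_k:=\hat a[\zeta_k,\mu_k]$; if $a_k=\partial$, take no impulse and let $(\zeta_{k+1},\mu_{k+1})$ be the observed state at $t_{j_k+1}^n$ (with $\mu_{k+1}=\mu_k$ by Proposition~\ref{prop: dyna M}); if $a_k\in\Ab$, record the impulse $(\tau,\alpha)=(t_{j_k}^n,a_k)$, wait for the resulting $\vartheta$, and take $(\zeta_{k+1},\mu_{k+1})$ as the observed state at the grid time $s^{n,a_k}[\zeta_k]$. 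The procedure halts after at most $2^n+1$ steps. At each step $(\zeta_k,\mu_k)$ is $\Fc^{z,m,\phi^\iota[z,m]}_{t_{j_k}^n}$-measurable by Proposition~\ref{prop: dyna M}, hence so is $a_k$, and $\phi^\iota[z,m]\in\Phi_n^{z,m}$. Telescoping the defining recursion of $\bar\vr_n$ with the $\iota/(2^n+1)$-near-optimality of $\hat a$ across the $\le 2^n+1$ decisions, and using the tower property together with $\Kc_T g(\zeta_{\mathrm{final}},\mu_{\mathrm{final}})=\E_m[g(\zeta_{\mathrm{final}},\mu_{\mathrm{final}},\upsilon,\epsilon_0)\mid\Fc^{z,m,\phi^\iota[z,m]}_{\T[\phi^\iota[z,m]]}]$, yields $J(z,m;\phi^\iota[z,m])\ge\bar\vr_n(z,m)-\iota$, hence $\vr_n\ge\bar\vr_n$ by arbitrariness of $\iota$.

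\medskip

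\emph{Random initial condition and main obstacle.} The map $(z,m)\mapsto\phi^\iota[z,m]$ is a finite composition of the universally measurable selector $\hat a$ with the Borel kernel $\kr$ and the Borel SDE flow, hence universally measurable; evaluating it at $(\zeta,\mu)$ produces the second assertion. The only delicate point is this measurability, and it is precisely why the discrete-time formulation is indispensable: as stressed in Section~3, in continuous time $\F^{z,m,\phi}$ depends non-trivially on $\phi$ and blocks any direct measurable selection, whereas on the grid every decision is made at a deterministic time as a function of the currently observed state, so that Lemma~\ref{lem : recu backward pour dpp} (itself based on the measurable selection machinery of \cite[Chapter~7]{dimitri1996stochastic}) closes the argument.
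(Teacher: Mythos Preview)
Your proof is correct and follows essentially the same route as the paper's: both directions rest on the universally measurable selector of Lemma~\ref{lem : recu backward pour dpp}, the identification of one-step conditional laws with the kernel $\kr$ (Remark~\ref{rem: joint condi distri ZM}), and the tower property. The differences are organizational and technical rather than conceptual. The paper frames the argument as a backward induction on the grid interval $[t_j^n,T)$ and, for the inequality $\bar\vr_n\le\vr_n$, uses a \emph{sequence} of tolerances $\eps_1,\dots,\eps_n$ and iterated limits $\lim_{\eps_1\downarrow0}\cdots\lim_{\eps_n\downarrow0}$; your single fixed tolerance $\iota/(2^n{+}1)$ per step is cleaner and achieves the same bound directly. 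More substantively, the paper does not leave the selector universally measurable: it invokes \cite[Lemma~7.27 p.~173]{dimitri1996stochastic} to replace each $\hat a^{n,\eps_k}$ by a Borel map $\tilde a^{n,\eps_k}$ agreeing $\P_m$-a.s.\ on the relevant pull-back law, so that adaptedness to $\F^{z,m,\phi}$ is manifest. Your argument instead relies (implicitly) on the fact that $\F^{z,m,\phi}$ is already $\P_m$-augmented, so that a universally measurable function of an $\Fc^{z,m,\phi}_s$-measurable random variable is again $\Fc^{z,m,\phi}_s$-measurable; this is correct but worth stating explicitly, and it is exactly the step the paper's Borel modification makes transparent. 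For the second assertion with a random initial condition, the paper similarly replaces the first-step selector by a Borel one tailored to the law of $(\zeta,\mu)$, whereas you keep everything universally measurable throughout---again valid, for the same reason.
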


\proof The proof proceeds by induction.    Our claim follows from definitions on $[t^{n}_{n},T]\x\R^{d}\x\Mb$. Assume that it holds on $[ t^{n}_{j+1}, T]\x\R^{d}\x\Mb$ for some $j\le n-1$. For the following, we fix  $z=(t,x)\in \Zb$ with $t\in [t^{n}_{j},t^{n}_{j+1})$ and $m\in \Mb$.

{Step 1:} In this step, we first construct a suitable candidate to be an almost-optimal control. Fix $\eps_{1},\ldots,\eps_{n}>0$, $\eps_{0}:=0$,  and set $\eps(i):=(\eps_{0},\eps_{1},\ldots,\eps_{i})$. Let $(\hat a^{n,\iota})_{\iota>0}$ be  as in Lemma \ref{lem : recu backward pour dpp}, and consider its extension defined by $\hat a^{n,\iota}=a_{\star}$ on $(T,\infty)\x\R^{d}\x \Mb$. Define $r^{\eps{(0)}}_{1}:=t$ and 
$\phi^{\eps{(1)}}_{1}\in \Phi^{z,m}_{n}$ by 
$$
(\tau^{\phi^{\eps{(1)}}_{1}}_{i}, \alpha^{\phi^{\eps{(1)}}_{1}}_{i})= (r^{\eps{(0)}}_{1}, \tilde a^{n,\eps_{1}}[r^{\eps{(0)}}_{1},x,m])\1_{\{i=1\}}+\1_{\{i>1\}}(T+i,a_{\star})  \;,\;\;i\ge 1.   
$$
where 
$$
\tilde a^{n,\eps_{1}}[r^{\eps{(0)}}_{1},x,m]:=\hat a^{n,\eps_{1}}[r^{\eps{(0)}}_{1},x,m] . 
$$
We then set 
\begin{align*}
r^{\eps{(1)}}_{2}&:=\min  \pi_{n}\cap[ \vartheta^{\phi^{\eps{(1)}}_{1}}_{1},2T]\cap (r^{\eps{(0)}}_{1},\infty).
\end{align*}
By  Lemma \ref{lem : recu backward pour dpp} and  \cite[Lemma 7.27 page 173]{dimitri1996stochastic} applied to the pull-back measure of $(Z^{z,\phi^{\eps{(1)}}_{1}}_{r^{\eps(1)}_{2}},$ $M^{z,m,\phi^{\eps{(1)}}_{1}}_{r^{\eps(1)}_{2}})$, we can find a Borel measurable map  $(t',x',m')\in \Zb\x \Mb \mapsto \tilde a^{n,\eps_{2}}_{2}[t',x',m']\in \Ab\cup\{\partial\}$ such that 
$$ 
\tilde a^{n,\eps_{2}}[Z^{z,\phi^{\eps{(1)}}_{1}}_{r^{\eps(1)}_{2}},M^{z,m,\phi^{\eps{(1)}}_{1}}_{r^{\eps(1)}_{2}}]= \hat a^{n,\eps_{2}}[Z^{z,\phi^{\eps{(1)}}_{1}}_{r^{\eps{(1)}}_{2}},M^{z,m,\phi^{\eps{(1)}}_{1}}_{r^{\eps{(1)}}_{2}}]\;\;\;\P_{m}-\as
$$
We define  $\phi^{\eps(2)}_{2}$ by 
\begin{align*}
&(\tau^{\phi^{\eps(2)}_{2}}_{i}, \alpha^{\phi^{\eps(2)}_{2}}_{i})=(r^{\eps(1)}_{2}, \tilde a^{n,\eps_{2}}[Z^{z,\phi^{\eps(1)}_{1}}_{r^{\eps(1)}_{2}},M^{z,m,\phi^{\eps(1)}_{1}}_{r^{\eps(1)}_{2}}])\1_{\{i=2,r^{\eps(1)}_{2}\le T\}} +(\tau^{\phi^{\eps(1)}_{1}}_{i}, \alpha^{\phi^{\eps(1)}_{1}}_{i})\1_{\{i\ne 2\}\cup \{r^{\eps(1)}_{2}> T\}},
\end{align*}
for $i\ge 1$. 
We then define recursively for $k\ge 2$  
\begin{align*}
r^{\eps{(k)}}_{k+1}:=&\inf \pi_{n}\cap[ \vartheta^{\phi^{\eps{(k)}}_{k}}_{k},2T]\cap (r^{\eps(k-1)}_{k},\infty)\\
(\tau^{\phi^{\eps(k+1)}_{k+1}}_{i}, \alpha^{\phi^{\eps(k+1)}_{k+1}}_{i})=&(r^{\eps(k)}_{k+1}, \tilde a^{n,\eps_{k+1}}[Z^{z,\phi^{\eps(k)}_{k}}_{r^{\eps(k)}_{k+1}},M^{z,m,\phi^{\eps(k)}_{k}}_{r^{\eps(k)}_{k+1}}])\1_{\{i=k+1,r^{\eps(k)}_{k+1}\le T\}}\\
& +(\tau^{\phi^{\eps(k)}_{k}}_{i}, \alpha^{\phi^{\eps(k)}_{k}}_{i})\1_{\{i\ne k+1\}\cup \{r^{\eps(k)}_{k+1}> T\}},   
\end{align*}
for $i\ge 1$, 
in which  $(t',x',m')\in \Zb\x \Mb \mapsto \tilde a^{n,\eps_{k+1}}_{k+1}[t',x',m']\in \Ab\cup\{\partial\}$ is a Borel measurable map such that 
$$ 
\tilde a^{n,\eps_{k+1}}[Z^{z,\phi^{\eps(k)}_{k}}_{r^{\eps(k)}_{k+1}},M^{z,m,\phi^{\eps(k)}_{k}}_{r^{\eps(k)}_{k+1}}]= \hat a^{n,\eps_{k+1}}[Z^{z,\phi^{\eps(k)}_{k}}_{r^{\eps(k)}_{k+1}},M^{z,m,\phi^{\eps(k)}_{k}}_{r^{\eps(k)}_{k+1}}]\;\;\;\P_{m}-\as
$$
We finally set 
$$
\phi^{\eps}:= (\tau^{\phi^{\eps(i)}_{i}}_{i}, \alpha^{\phi^{\eps(i)}_{i}}_{i})_{i\ge 1}\in \Phi^{z,m}_{n}.
$$  
 
Step 2: We now prove that $\bar \vr_{n}(z,m)\ge  \vr_{n}(z,m)$. By the above construction and Lemma \ref{lem : recu backward pour dpp}, 
\begin{align*}
 \bar \vr_{n}(z,m)\ge\bar J(z,m; \alpha^{\phi^{\eps(1)}_{1}}_{1})&\ge   \bar \vr_{n}(z,m)-\eps_{1}. 
\end{align*}
Since $\vr_{n}(t_{k},\cdot)=\bar \vr_{n}(t_{k},\cdot)$ for $k>j$ by our induction hypothesis, we obtain 
\begin{align*}
\bar \vr_{n}(z,m)& \ge   \sup_{a\in \Ab\cup\{\partial\}} \E_{m}[ \vr_{n}(Z^{z,a}_{r^{\eps(1)}_{2}},M^{z,m,a}_{r^{\eps(1)}_{2}} )] -\eps_{1}\ge \vr_{n}(z,m)-\eps_{1},
\end{align*}
in which the last inequality follows from a simple conditioning argument as in the proof of Proposition \ref{prop: DPP}. 
By arbitrariness of $\eps_{1}>0$, this implies that $\bar \vr_{n}(z,m)\ge  \vr_{n}(z,m)$. 

Step 3: It remains  to prove that $\bar \vr_{n}(z,m)\le  \vr_{n}(z,m)$. Define 
$$
Y_{i}^{\eps(i-1)}:=(Z^{z,\phi^{\eps}}_{r^{\eps(i-1)}_{i}},M^{z,m,\phi^{\eps}}_{r^{\eps(i-1)}_{i}}),\;i\ge 1,
$$
with $Y_{0}^{\eps(-1)}:=(z,m)$, and observe that $Y_{i}^{\eps(i-1)}$ and $\Fc_{r^{\eps(i-1)}_{i}}^{z,m,\phi^{\eps}}$ only depend on $\eps(i-1)$. Then, 
for each $i\ge 0$, 
\begin{align*}
\bar \vr_{n}(Y_{i}^{\eps(i-1)})&=\lim_{\eps_{i}\downarrow 0} \E_{m}[\bar \vr_{n}(Z^{Y_{i}^{\eps(i-1)},\phi^{\eps(i)}_{i}}_{r^{\eps(i)}_{i+1}},M^{Y_{i}^{\eps(i-1)},\phi^{\eps(i)}_{i}}_{r^{\eps(i)}_{i+1}})|\Fc_{r^{\eps(i-1)}_{i}}^{z,m,\phi^{\eps}}]] \
\\
 &=\lim_{\eps_{i}\downarrow 0} \E_{m}[\1_{\{r^{\eps(i)}_{i+1}\le T\}}\bar \vr_{n}(Z^{Y_{i}^{\eps(i-1)},\phi^{\eps(i)}_{i}}_{r^{\eps(i)}_{i+1}},M^{Y_{i}^{\eps(i-1)},\phi^{\eps(i)}_{i}}_{r^{\eps(i)}_{i+1}})|\Fc_{r^{\eps(i-1)}_{i}}^{z,m,\phi^{\eps}}]\\
 &+\lim_{\eps_{i}\downarrow 0}\E_{m}[\1_{\{r^{\eps(i)}_{i+1}> T\}}g(Z^{Y_{i}^{\eps(i-1)},\phi^{\eps(i)}_{i}}_{r^{\eps(i)}_{i+1}},M^{Y_{i}^{\eps(i-1)},\phi^{\eps(i)}_{i}}_{r^{\eps(i)}_{i+1}},\upsilon,\epsilon_{0})|\Fc_{r^{\eps(i-1)}_{i}}^{z,m,\phi^{\eps}}]\;\;\;\;\;\P_{m}-\as 
\end{align*}
on $\{r^{\eps(i-1)}_{i}\le T\}$.
Since $g$ is bounded, so is $\bar \vr_{n}$. The above combined with the dominated convergence theorem then implies 
\begin{align*}
\bar \vr_{n}(z,m)&=\lim_{\eps_{1}\downarrow 0}\cdots\lim_{\eps_{n}\downarrow 0} \E_{m}[\sum_{i=0}^{n} \1_{\{r^{\eps(i)}_{i+1}> T\ge r^{\eps(i-1)}_{i} \}}g(Z^{Y_{i}^{\eps(i-1)},\phi^{\eps(i)}_{i}}_{r^{\eps(i)}_{i+1}},M^{Y_{i}^{\eps(i-1)},\phi^{\eps(i)}_{i}}_{r^{\eps(i)}_{i+1}},\upsilon,\epsilon_{0})] 
\\
&=\lim_{\eps_{1}\downarrow 0}\cdots\lim_{\eps_{n}\downarrow 0} J(z,m;\phi^{\eps}) 
\le \vr_{n}(z,m),
\end{align*}
which concludes the proof that $\bar \vr_{n}=\vr_{n}$. 

Step 4. The second assertion of the proposition is obtained by observing that, given a random variable $(\zeta,\mu)$ with values in $\Zb\x \Mb$,  one can choose  $\tilde a^{n,\eps_{1}}$ Borel measurable such that 
$
\tilde a^{n,\eps_{1}}[\zeta,\mu]=\hat a^{n,\eps_{1}}[\zeta,\mu] $ $\P_{m}-{\rm a.s.}
$
\ep 
\vs2
 
We are now in position to conclude that $\vr_{n}$ satisfies a dynamic programming principle. 
\begin{Corollary}\label{cor: dpp discret} Fix $z=(t,x)\in \Zb$ and $m\in \Mb$. Let $(\theta^{\phi},\phi\in \Phi^{z,m}_{n})$ be such that each $\theta^{\phi}$ is a   $\F^{z,m,\phi}$-stopping time with values in $[t, 2T]\cap (\pi_{n}\cup [T,\infty))$ such that 
$
\theta^{\phi}\in \Nc^{\phi}\cap [t,\T[\phi]]$ $\P_{m}-\as  
$
for $\phi \in \Phi^{z,m}_{n}$.
Then,   
 $$
  \vr_{n}(z,m)= \sup_{\phi\in \Phi^{z,m}_{n}} \E_{m}[  \vr_{n}(Z^{z,\phi}_{\theta^{\phi}},M^{z,m,\phi}_{\theta^{\phi}} )]. 
$$ 
 
\end{Corollary}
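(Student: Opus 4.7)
The plan is to prove the corollary by establishing the two inequalities separately, leveraging Proposition \ref{prop: bar vn = vn} (which identifies $\vr_n$ with the backward recursion $\bar\vr_n$ and provides a measurable $\iota$-optimal selector) together with a conditioning argument in the spirit of Remark \ref{rem : conditioning}. Throughout, the fact that $\theta^{\phi}$ takes values in the grid $\pi_n\cup[T,\infty)$ and lies in $\Nc^{\phi}$ is essential because it ensures that a control can be freely chosen after $\theta^{\phi}$ without interfering with any ongoing latency period, and that the concatenated control remains in $\Phi^{\cdot}_n$.

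For the inequality $\vr_n(z,m)\le \sup_{\phi}\E_m[\vr_n(Z^{z,\phi}_{\theta^{\phi}},M^{z,m,\phi}_{\theta^{\phi}})]$, I would fix $\phi\in\Phi^{z,m}_n$ and condition on $\Fc^{z,m,\phi}_{\theta^{\phi}}$. Using the representation described in Remark \ref{rem : conditioning}, one writes $G^{z,m}(\phi)$ in terms of a shifted control $\phi_{\omega^{t,\theta^{\phi}}}$ that, given the conditioning, is an admissible element of $\Phi^{Z^{z,\phi}_{\theta^{\phi}}(\omega),M^{z,m,\phi}_{\theta^{\phi}}(\omega)}_n$ (the grid condition on $\theta^{\phi}$ ensures that its impulse times still lie in $\pi_n$ after restarting). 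Therefore
\begin{align*}
\E_m\!\left[G^{z,m}(\phi)\,\big|\,\Fc^{z,m,\phi}_{\theta^{\phi}}\right]\le \vr_n\!\left(Z^{z,\phi}_{\theta^{\phi}},M^{z,m,\phi}_{\theta^{\phi}}\right),
\end{align*}
and taking expectations then supremum over $\phi$ yields the claim.

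For the reverse inequality, I would use the measurable selector from the last assertion of Proposition \ref{prop: bar vn = vn}: given $\iota>0$ and any $\phi\in\Phi^{z,m}_n$, applied to the random variable $(\zeta,\mu):=(Z^{z,\phi}_{\theta^{\phi}},M^{z,m,\phi}_{\theta^{\phi}})$, there exists a measurable map $(z',m')\mapsto \phi^{\iota}[z',m']$ with $J(\zeta,\mu;\phi^{\iota}[\zeta,\mu])\ge \vr_n(\zeta,\mu)-\iota$ a.s. I would then concatenate: keep $\phi$ up to $\theta^{\phi}$ and paste $\phi^{\iota}[Z^{z,\phi}_{\theta^{\phi}},M^{z,m,\phi}_{\theta^{\phi}}]$ (appropriately time-shifted) afterwards, obtaining a control $\tilde\phi\in\Phi^{z,m}_n$. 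The inclusion $\theta^{\phi}\in\Nc^{\phi}$ guarantees that the new impulse times can be inserted without violating the constraints $\vartheta_i\le \tau_{i+1}$. Conditioning again at $\theta^{\phi}$ and invoking the selector inequality gives
\begin{align*}
\vr_n(z,m)\ge J(z,m;\tilde\phi)\ge \E_m\!\left[\vr_n\!\left(Z^{z,\phi}_{\theta^{\phi}},M^{z,m,\phi}_{\theta^{\phi}}\right)\right]-\iota.
\end{align*}
Letting $\iota\downarrow0$ and taking supremum over $\phi$ finishes the proof.

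The main obstacle will be the concatenation step of the lower bound: one must verify that $\tilde\phi$ is indeed an element of $\Phi^{z,m}_n$, namely that its impulse times are $\F^{z,m,\tilde\phi}$-stopping times and its actions are measurable with respect to the appropriate $\sigma$-algebras. This is delicate because, as emphasized throughout the paper, the filtration depends on the initial data and on the control itself. The key observations that make this work are (i) $\theta^{\phi}$ is grid-valued and lies in $\Nc^{\phi}$, so that the filtration generated by $\tilde\phi$ up to $\theta^{\phi}$ coincides with that generated by $\phi$, and (ii) Borel-measurability of the selector $(z',m')\mapsto \phi^{\iota}[z',m']$ combined with the pull-back argument of \cite[Lemma 7.27]{dimitri1996stochastic} (as already used in Step 1 of the proof of Proposition \ref{prop: bar vn = vn}) ensures that the future control is adapted to the relevant sub-filtration of $\F^{z,m,\tilde\phi}$.
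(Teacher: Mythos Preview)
Your proposal is correct and follows essentially the same route as the paper: the inequality $\le$ is obtained by a conditioning argument (which the paper dismisses as ``trivial''), and the inequality $\ge$ is obtained by invoking the measurable $\iota$-optimal selector of Proposition \ref{prop: bar vn = vn} at $(Z^{z,\phi}_{\theta^{\phi}},M^{z,m,\phi}_{\theta^{\phi}})$, concatenating with $\phi$ up to $\theta^{\phi}$, and letting $\iota\downarrow 0$. Your discussion of the measurability and admissibility of the concatenated control is more explicit than the paper's, which simply asserts that $\tilde\phi^{\iota}\in \Phi^{z,m}_n$ without further comment.
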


\proof  The inequality $\le$ can be obtained trivially by a conditioning argument.  Fix $\phi \in  \Phi^{z,m}_{n}$. By Proposition \ref{prop: bar vn = vn}, we can find a Borel measurable map  $(z',m')\mapsto \phi^{\iota}[z',m']$ such that 
$$
J(Z^{z, \phi}_{\theta^{\phi}},M^{z,m,  \phi}_{\theta^{\phi}};\phi^{\iota}[Z^{z, \phi}_{\theta^{\phi}},M^{z,m,  \phi}_{\theta^{\phi}}] )\ge   \vr_{n}(Z^{z, \phi}_{\theta^{\phi}},M^{z,m,  \phi}_{\theta^{\phi}} )-\iota.
  $$
  Let us now simply write $\phi^{\iota}$ for $\phi^{\iota}[Z^{z, \phi}_{\theta^{\phi}},M^{z,m,  \phi}_{\theta^{\phi}}] $. Without loss of generality, one can assume that $\tau^{\phi}_{1}\ge t$ and that $\tau^{\phi^{\iota}}_{1}\ge \theta^{\phi}$. Let $I:={\rm card}\{i\ge 1: \tau^{\phi}_{i}<\theta^{\phi}\}$.
  Then, 
  $
   J(z,m;\tilde \phi^{\iota})
 \ge \E_{m}[    \vr_{n}(Z^{z, \phi}_{\theta^{\phi}},M^{z,m,  \phi}_{\theta^{\phi}} )]-\iota
 $
 in which  
$
 (\tau^{\tilde \phi^{\iota}}_{i},\alpha^{\tilde \phi^{\iota}}_{i})= \1_{i\le I}    (\tau^{\phi}_{i},\alpha^{\phi}_{i})+\1_{i>I} (\tau^{\phi^{\iota}}_{i-I},\alpha^{\phi^{\iota}}_{i-I}),$ $i\ge 1$.
Sending $\iota \to 0$ leads to the required result.
\ep

%%%%
\subsection{Super-solution property as the time step vanishes}\label{sec: super sol vrinfty}

We now consider the   limit $n\to \infty$. Let us set,  for $(z,m)\in \R_{+}\x \R^{d}\x\Mb$, 
    \[
        \begin{aligned}
              \vr_{\circ}(z,m) := \liminf_{(t',x', m',n) \rightarrow (z, m,\infty)}\vr_{n}(t',x', m').
        \end{aligned}
    \]

\begin{Remark}\label{rem: vrcic=KcTg apres T} Note that \reff{eq: extension vrn} and \reff{eq: hyp conti KcT et Kc} implies that $\vr_{\circ}=\Kc_{T}g$ on $(T,\infty)\x \R^{d}\x \Mb$.
\end{Remark}

\begin{Proposition}\label{prop : vrcirc sursol} The function $ \vr_{\circ}$ is a viscosity super-solution of \reff{eq: pde interior}-\reff{eq: pde T}.
\end{Proposition}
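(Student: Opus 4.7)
My plan is to pass to the limit in the discrete time dynamic programming principle of Corollary \ref{cor: dpp discret}. After the standard reduction --- replacing $\vp$ by $\vp - \|z-z_{\circ}\|^{4}_{\Zb} - {\rm d}_{\Mb}(m, m_{\circ})^{2}$ so that $\vr_{\circ} - \vp$ has a strict global minimum at $(z_{\circ}, m_{\circ})$ --- I would extract sequences $n_{k} \uparrow \infty$ and $(\hat z_{k}, \hat m_{k}) \to (z_{\circ}, m_{\circ})$ realizing this minimum along $\vr_{n_{k}}$, such that $(\hat z_{k}, \hat m_{k})$ is a local minimizer of $\vr_{n_{k}} - \vp$ with residual $c_{k} := (\vr_{n_{k}} - \vp)(\hat z_{k}, \hat m_{k}) \to 0$. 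This extraction is routine from the liminf definition of $\vr_{\circ}$ and the continuity of $\vp$.

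For the obstacle inequality $\vp(z_{\circ}, m_{\circ}) \ge \Kc\vr_{\circ}(z_{\circ}, m_{\circ})$, I would fix $a \in \Ab$ and apply Corollary \ref{cor: dpp discret} to $\vr_{n_{k}}$ with the control $\tau^{\phi}_{1} = \hat t_{k}$, $\alpha^{\phi}_{1} = a$, stopped at $\theta^{\phi} := s^{n_{k}, a}[\hat z_{k}]$, yielding
\[
\vr_{n_{k}}(\hat z_{k}, \hat m_{k}) \ge \E_{\hat m_{k}}\left[\vr_{n_{k}}(Z^{\hat z_{k}, a}_{s^{n_{k}, a}[\hat z_{k}]}, M^{\hat z_{k}, \hat m_{k}, a}_{s^{n_{k}, a}[\hat z_{k}]})\right].
\]
Since $s^{n_{k}, a}[\hat z_{k}] \downarrow \vartheta^{\phi}_{1}$ as $n_{k} \to \infty$, while $M$ stays constant past $\vartheta^{\phi}_{1}$ by \reff{eq: dyna m out of vartheta} and $X$ propagates continuously past $\vartheta^{\phi}_{1}$ through the Lipschitz SDE \reff{eq: dyna X}, I would pass to the liminf via Fatou's lemma, the liminf definition of $\vr_{\circ}$, and the lower-semicontinuity preservation in \reff{eq: hyp conti KcT et Kc} (which in turn rests on the weak continuity of the kernel $\kr$ of \reff{eq: def nu}). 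This gives $\vp(z_{\circ}, m_{\circ}) \ge \Kc^{a}\vr_{\circ}(z_{\circ}, m_{\circ})$, and the supremum over $a \in \Ab$ closes the obstacle term in both cases $t_{\circ}<T$ and $t_{\circ}=T$.

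For the PDE part when $t_{\circ} < T$, I would apply Corollary \ref{cor: dpp discret} to the do-nothing control ($\tau^{\phi}_{1} = T + 1$), choosing $\theta^{\phi}$ to be the first grid point after the exit of $(t, X^{\hat z_{k}, \circ}_{t}, \hat m_{k})$ from a fixed closed ball $B$ around $(z_{\circ}, m_{\circ})$, truncated at $\hat t_{k} + h_{n_{k}}$ for a vanishing grid-aligned $h_{n_{k}}$. Since $\vr_{n_{k}} \ge \vp + c_{k}$ on $B$ by local minimality, and $\vr_{n_{k}}(\hat z_{k}, \hat m_{k}) = \vp(\hat z_{k}, \hat m_{k}) + c_{k}$, the DPP collapses to $\vp(\hat z_{k}, \hat m_{k}) \ge \E_{\hat m_{k}}[\vp(\theta^{\phi}, X^{\hat z_{k}, \circ}_{\theta^{\phi}}, \hat m_{k})]$. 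It\^o's formula, division by the expected increment, and letting first $h_{n_{k}} \downarrow 0$ and then $k \to \infty$, yields $-\Lc\vp(z_{\circ}, m_{\circ}) \ge 0$. At the terminal boundary $t_{\circ} = T$, I would simply note that the do-nothing control gives $\vr_{n}(z, m) \ge \E_{m}[\Kc_{T} g(T, X^{z, \circ}_{T}, m)]$ for every $(z, m) \in \Zb \x \Mb$ and every $n$ (using that $\upsilon \sim m$ under $\P_{m}$ and the independence of $\epsilon_{0}$); sending $(t', x', m') \to (T, x_{\circ}, m_{\circ})$ and invoking continuity of $\Kc_{T} g$ in \reff{eq: hyp conti KcT et Kc} then gives $\vp(T, x_{\circ}, m_{\circ}) = \vr_{\circ}(T, x_{\circ}, m_{\circ}) \ge \Kc_{T} g(T, x_{\circ}, m_{\circ})$.

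The hard part will be the passage to the liminf in the obstacle inequality, since one must interchange a liminf with an expectation while controlling two simultaneous limits: the grid refinement $n_{k} \to \infty$, which sends $s^{n_{k}, a}[\hat z_{k}]$ down to $\vartheta^{\phi}_{1}$, and the spatial approximation $(\hat z_{k}, \hat m_{k}) \to (z_{\circ}, m_{\circ})$, under which the kernels $\kr(\cdot | \hat z_{k}, \hat m_{k}, a)$ converge weakly to $\kr(\cdot | z_{\circ}, m_{\circ}, a)$. Organizing these two limits so that lower-semicontinuity of $\vr_{\circ}$ can feed into Fatou (through the semicontinuity of $\Kc\vr_{\circ}$ from \reff{eq: hyp conti KcT et Kc}) is the only genuinely non-routine step; all other pieces follow classical viscosity super-solution arguments.
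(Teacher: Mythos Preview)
Your outline matches the paper's strategy closely: pass to the limit in Corollary \ref{cor: dpp discret} using a do-nothing control for the $-\Lc\vp$ and $\Kc_{T}g$ parts, and an immediate action for the obstacle. Those first two pieces are handled exactly as you describe, and the paper organises them the same way (its Step 2).

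The one point where the paper does something concrete that you leave open is precisely the ``hard part'' you flag. A direct Fatou argument does not apply, because the integrating measure $\kr(\cdot\,|\,\hat z_{k},\hat m_{k},a)$ varies with $k$; you cannot simply push a $\liminf$ inside an expectation whose underlying kernel is moving. The paper resolves this by decoupling the two limits. First, it conditions on the jump so that the right-hand side reads $\Kc^{a}\psi_{k}(\hat z_{k},\hat m_{k})$ with $\psi_{k}(z',m'):=\E[\vr_{n_{k}}(Z^{z',\circ}_{s^{n_{k}}_{+}[z']},m')]$, separating the kernel from the short SDE propagation. Then, for fixed $k_{\circ}$, it introduces $\vp_{k_{\circ}}$, the lower-semicontinuous envelope of $\inf_{k\ge k_{\circ}}\psi_{k}$, so that $\vr_{n_{k}}(\hat z_{k},\hat m_{k})\ge \Kc^{a}\vp_{k_{\circ}}(\hat z_{k},\hat m_{k})$ for all $k\ge k_{\circ}$. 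Now $\vp_{k_{\circ}}$ is fixed and lsc, so assumption \reff{eq: hyp conti KcT et Kc} lets you pass $(\hat z_{k},\hat m_{k})\to(z_{\circ},m_{\circ})$ in the kernel. Finally one shows $\lim_{k_{\circ}\to\infty}\vp_{k_{\circ}}\ge \vr_{\circ}$ (this is the paper's Step 3, a short argument using that $Z^{z',\circ}_{s^{n_{k}}_{+}[z']}\to z$ a.s.) and concludes by dominated convergence. This lsc-envelope-of-infimum device is the missing mechanism behind your ``organising the two limits''; once you plug it in, your proof is complete and essentially identical to the paper's.

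A minor organisational difference: the paper proves $\vr_{\circ}\ge \Kc\vr_{\circ}$ pointwise (Step 1), without reference to a test function, using any sequence $(z_{k},m_{k},n_{k})$ realising $\vr_{\circ}(z_{\circ},m_{\circ})$; your version routes it through the minimising sequence of $\vr_{n_{k}}-\vp$. Both are fine since $\vp(z_{\circ},m_{\circ})=\vr_{\circ}(z_{\circ},m_{\circ})$ at the contact point, but the pointwise statement is slightly cleaner because the obstacle term in the super-solution definition is $\vp-\Kc\vr_{\circ}$, not $\vp-\Kc\vp$.
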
  

\proof Let $n_{k}\to \infty$ and $(z_{k},m_{k})\to (z_{o},m_{\circ})$ be such that   $\vr_{n_{k}}(z_{k},m_{k})\to \vr_{\circ}(z_{o},m_{o})$. 
\\
Step 1. We first show that $\vr_{\circ} (z_{\circ},m_{\circ})\ge \Kc \vr_{\circ}(z_{\circ},m_{\circ})$.  By Corollary \ref{cor: dpp discret} applied to $\vr_{n_{k}}$ with  a control $\phi^{k}$ defined by 
$
(\tau^{k}_{i},\alpha^{k}_{i})=(t_{k},a_{k})\1_{\{i=1\}} + \sum_{j>1} (T+j,a_{\star})\1_{\{i=j\}},$ $i\ge 1,
$
with $a_{k}\in   \Ab$, 
we obtain 
\begin{align*}
\vr_{n_{k}}(z_{k},m_{k})&\ge \sup_{a_{k}\in \Ab}\int \E[\vr_{n_{k}}(Z^{z',\circ}_{s^{n_{k}}_{+}[z']},m')]d\kr(z',m'|z_{k},m_{k},a_{k})]
= \Kc \E[\vr_{n_{k}}(Z^{\cdot,\circ}_{s^{n_{k}}_{+}[\cdot]},\cdot)](z_{k},m_{k}), 
\end{align*}
in which $s^{n_{k}}_{+}[t,x]:=\min  \pi_{n_{k}}  \cap [t,\infty)$.  
Let $\vp_{k_{\circ}}$ be the lower-semicontinuous enveloppe of $\inf\{ \E[\vr_{n_{k}}(Z^{\cdot,\circ}_{s^{n_{k}}_{+}[\cdot]},\cdot)],k\ge k_{\circ}\}$.  
Then,  for $k\ge k_{\circ}$,
$
\vr_{n_{k}}(z_{k},m_{k})\ge \int \vp_{k_{\circ}}(z',m')d\kr(z',m'|z_{k},m_{k},a_{k}),
$
and, by \reff{eq: hyp conti KcT et Kc}, passing to the limit $k\to \infty$ leads to 
$
\vr_{\circ}(z_{\circ},m_{\circ})\ge \int \vp_{k_{\circ}}(z',m')d\kr(z',m'|z_{\circ},m_{\circ},a_{\circ}).
$
We shall prove in step 3 that $\lim_{k_{\circ}\to \infty}\vp_{k_{\circ}}\ge \vr_{\circ}$. These maps are  bounded, since $g$ is.  Dominated convergence then implies that 
$\vr_{\circ}(z_{\circ},m_{\circ})\ge \int \vr_{\circ}(z',m')d\kr(z',m'|z_{\circ},m_{\circ},a_{\circ}).
$

Step 2. Let $\vp$ be a (bounded) $C^{1,2,0}([0,T]\x \R^{d}\x \Mb)$ function and $(z_{\circ},m_{\circ})\in [0,T)\x \R^{d}\x \Mb$ be a minimal point of $\vr_{\circ}-\vp$ on $\Zb\x \Mb$. Without loss of generality, one can assume that $(\vr_{\circ}-\vp)(z_{\circ},m_{\circ})$ $=$ $0$.  
 Let $B$  and   $(z_{k},m_{k},n_{k})_{n\ge 1}$ be as in Lemma \ref{lem: approx stabi} below. We write $z_{k}=(t_{k},x_{k}) , z_{\circ}=(t_{\circ},x_{\circ})\in [0,T]\x \R^{d}$. 
On the other hand, by considering the control 
$\phi^{k}$ defined by 
$
(\tau^{k}_{i},\alpha^{k}_{i})= (T+i,a_{\star}),\;i\ge 1,
$
we obtain from Corollary \ref{cor: dpp discret} that 
\begin{align*}
\vr_{n_{k}}(z_{k},m_{k})&\ge \E_{m}[ \vr_{n_{k}}(Z^{z_{k},\circ}_{t_{k}+  h_{k}},m)] \;\;
\end{align*}
 with $h_{k}\in  T2^{-n_{k}}(\N\cup \{0\})$ such that  $t_{k}+ h_{k}<T$ if $t_{\circ}\ne T$ and $t_{k}+ h_{k}=T$ otherwise.

 Let $C>0$ be a common bound for  $(\vr_{n})_{n\ge 1}$ and $\vp$.  Then  we can choose $(h_{k})_{k\ge 1}$ such that 
$$
 \delta_{k}:=(\vp(z_{k},m_{k})-\vr_{n_{k}}(z_{k},m_{k})-2C\;\P[Z^{z_{k},\circ}_{t_{k}+ h_{k}}\notin B])/h_{k}\to 0.
$$
This follows from standard estimates on the solution of sde's with Lipschitz coefficients. 
Then, if $t_{\circ}<T$, 
\begin{align*}
0&\ge  h_{k}^{-1}\E_{m}[ \vp(Z^{z_{k},\circ}_{t_{k}+ h_{k}},m_{k})-\vp_{n_{k}}(z_{k},m_{k})]+ \delta_{k}= \E_{m}[ h_{k}^{-1}\int_{t_{k}}^{t_{k}+h_{k}}\Lc\vp(Z^{z_{k},\circ}_{s},m_{k})ds]+ \delta_{k},
\end{align*}
sending $k\to \infty$ leads to $\Lc\vp(z_{\circ},m_{\circ})\le 0$. If $t_{\circ}=T$,   
$
\vr_{n_{k}}(z_{k},m_{k})\ge \E_{m}[ g(Z^{z_{k},\circ}_{T},m_{k},\upsilon,\epsilon_{0})] =\E_{m}[ \Kc_{T}g(Z^{z_{k},\circ}_{T},m_{k})]
$
and passing to the limit leads to 
$
\vp(z_{\circ},m_{\circ})\ge  \Kc_{T}g(z_{\circ},m_{\circ}),
$
recall \reff{eq: hyp conti KcT et Kc}. Finally, $\vp (z_{\circ},m_{\circ})\ge \Kc \vp(z_{\circ},m_{\circ})$ by Step 1.\ep

Step 3: It remains to prove the claim used in Step 1. Let us set 
	\[
		\bar \varphi_{k_{\circ}}(z', m') :=  \inf_{k \geq k_{\circ}}\left\{\mathbb{E}\left[\vr_{n_{k}}\left(Z_{s_{+}^{n_{k}}}^{z', \circ}[z'], m')\right)\right]\right\},
	\]
	so that $\varphi_{k_{\circ}}$ is the lower-semicontinuous envelope of $\bar \varphi_{k_{\circ}}$. Note that  $Z_{s_{+}^{n_{k}}[z']}^{z', \circ} $ converges a.s.~to  $z$ as $(z',k)\to (z,\infty)$. Hence, for all $\eps>0$, there exist open neighborhoods $B_{\eps}(z,m)$ and $B_{\frac{\eps}{2}}(z,m)$ of $(z,m)$, as well as $k_{\eps}\in \N$ such that $\P[(Z_{s_{+}^{n_{k}}[z']}^{z', \circ} ,m')\notin B_{\eps}(z,m)]\le \eps$ for $k\ge k_{\eps}$ and $(z',m')\in B_{\frac{\eps}{2}}(z,m)$. One can also choose $k_{\eps}$ and $B_{\frac{\eps}{2}}(z,m)$ such that  
	$
		\inf_{k \geq k_{\varepsilon}}\vr_{n_{k}}(z', m') \geq \vr_{\circ}(z, m') - \varepsilon
	$
	 for all $k \geq k_{\varepsilon}$ and $(z',m') \in B_{\frac{\eps}{2}}(z,m)$. Let $C>0$ be a bound for $(|\vr_{n}|)_{n\ge 1}$ and $|\vr_{\circ}|$, recall that $g$ is bounded. 
Then, for $k_{\circ}$ large enough and $(z',m') \in B_{\frac{\eps}{2}}(z,m)$, 
		\begin{align*}
			\bar \varphi_{k_{\circ}}(z', m') \geq   \vr_{\circ}(z, m) -\eps-2C \sup_{k\ge k_{\circ}}\P[(Z_{s_{+}^{n_{k}}[z']}^{z', \circ},m') \notin B_{\eps}(z,m)]
			\ge  \vr_{\circ}(z, m)-\eps(1+2C). 
		\end{align*}
Hence, since  $ \vr_{\circ}$ is lower-semicontinuous, 
$$
\lim_{k_{\circ}\to \infty}\varphi_{k_{\circ}}(z, m)=\lim_{k_{\circ}\to \infty}\liminf_{(z',m')\to (z,m)} \bar \varphi_{k_{\circ}}(z', m') \geq   \vr_{\circ}(z, m). 
$$
 \ep
\vs2

We conclude this section with the technical lemma that was used in the above proof. 
 
 \begin{Lemma}\label{lem: approx stabi}  Let   $(u_{n})_{n\ge 1}$ be a sequence of  lower semi-continuous maps on  $\Zb\x \Mb$ and define 
 $u_{\circ}:=\liminf_{(z',m',n)\to (\cdot,\infty)} u_{n}(z',m')$ on $ \Zb\x \Mb$. Assume that $u_{\circ}$ is locally bounded.  Let $\vp$ be a  continuous map and assume that $(z_{\circ},m_{\circ})$ is a strict minimal point of $u_{\circ}-\vp$ on $\Zb\x \Mb$. Then, one can find   a bounded open set  $B$  of $[0,T]\x \R^{d}$ and  a sequence $(z_{k},m_{k},n_{k})_{n\ge 1}\subset B\x\Mb\x\N$ such that $n_{k}\to \infty$, $(z_{k},m_{k})$ is a   minimum point of  $u_{n_{k}}-\vp$ on $ B \x \Mb$ and   $(z_{k},m_{k},u_{n_{k}}(z_{k},m_{k}))\to (z_{o},m_{\circ},u_{\circ}(z_{o},m_{o}))$.
 \end{Lemma}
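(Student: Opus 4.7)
The plan is the standard Barles--Souganidis stability argument for viscosity solutions, with the only twist being that we must exploit the local compactness of $\Mb$ (rather than compactness) to ensure minima exist. First I would use that $\Mb$ is locally compact to pick a compact neighbourhood $K$ of $m_{\circ}$ with $m_{\circ}\in \mathrm{int}(K)$, and a bounded open set $B\subset [0,T]\times\R^{d}$ containing $z_{\circ}$ with $\bar B$ compact. Since $u_{\circ}$ is lower semi-continuous (as a $\liminf$ of lsc maps) and $\vp$ is continuous, and since $(z_{\circ},m_{\circ})$ is a strict minimum of $u_{\circ}-\vp$, I can shrink $B$ and $K$ so that
$$
\inf_{(z',m')\in \partial(\bar B\x K)}(u_{\circ}-\vp)(z',m')\;\ge\; (u_{\circ}-\vp)(z_{\circ},m_{\circ})+2\eta
$$
for some $\eta>0$.

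Next, for each $n$, since $u_{n}-\vp$ is lsc on the compact set $\bar B\times K$, it attains its minimum at some $(\tilde z_{n},\tilde m_{n})\in \bar B\times K$. To control the minimum value from above, I would use the definition of $u_{\circ}$ as a $\liminf$: there exist $n_{k}\to \infty$ and $(z'_{k},m'_{k})\to (z_{\circ},m_{\circ})$ with $u_{n_{k}}(z'_{k},m'_{k})\to u_{\circ}(z_{\circ},m_{\circ})$; eventually $(z'_{k},m'_{k})\in B\times \mathrm{int}(K)$, so
$$
u_{n_{k}}(\tilde z_{n_{k}},\tilde m_{n_{k}})-\vp(\tilde z_{n_{k}},\tilde m_{n_{k}})\;\le\; u_{n_{k}}(z'_{k},m'_{k})-\vp(z'_{k},m'_{k})\;\longrightarrow\;(u_{\circ}-\vp)(z_{\circ},m_{\circ}).
$$

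Now compactness of $\bar B\times K$ lets me extract a further subsequence (not relabelled, $z_{k}:=\tilde z_{n_{k}}$, $m_{k}:=\tilde m_{n_{k}}$) with $(z_{k},m_{k})\to (z^{*},m^{*})\in \bar B\times K$. The defining property of the $\liminf$ envelope gives $\liminf_{k} u_{n_{k}}(z_{k},m_{k})\ge u_{\circ}(z^{*},m^{*})$, hence
$$
(u_{\circ}-\vp)(z^{*},m^{*})\;\le\; \limsup_{k}\bigl[u_{n_{k}}(z_{k},m_{k})-\vp(z_{k},m_{k})\bigr]\;\le\;(u_{\circ}-\vp)(z_{\circ},m_{\circ}).
$$
Strictness of the minimum forces $(z^{*},m^{*})=(z_{\circ},m_{\circ})$, and then both inequalities become equalities, which also yields $u_{n_{k}}(z_{k},m_{k})\to u_{\circ}(z_{\circ},m_{\circ})$.

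Finally, since $(z_{\circ},m_{\circ})$ lies in the interior of $B\times K$, for $k$ large the $(z_{k},m_{k})$ are interior points of $\bar B\x K$, hence local minima of $u_{n_{k}}-\vp$; they are therefore minima on $B\x \Mb$ in the sense used by the ensuing viscosity argument (where only behaviour on a neighbourhood of $(z_{\circ},m_{\circ})$ matters, and the test function $\vp$ of Proposition~\ref{prop : vrcirc sursol} can, if one wants a truly global minimum, be modified outside $K$ without affecting its derivatives at $(z_{\circ},m_{\circ})$). The main obstacle is really the first step: one has to simultaneously invoke local compactness of $\Mb$ and the strict-minimum quantification so that the lsc minimum on the compact slab $\bar B\x K$ is attained and can be pushed to an interior point in the limit; everything else is the classical stability manipulation.
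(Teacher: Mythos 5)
Your argument is the standard Barles--Souganidis stability proof, exploiting local compactness of $\Mb$ to work on a compact slab $\bar B\times K$, which is exactly the route the paper takes (its proof is a one-line citation to Barles' Lemma 6.1 with the remark that local compactness of $\Mb$ makes it go through). Your closing observation that the construction literally yields minimality only on $\bar B\times K$ (at interior points), which is what the ensuing viscosity argument in Proposition \ref{prop : vrcirc sursol} actually uses, is a correct and even slightly more careful reading than the paper's own statement of the lemma.
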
 

\proof Since $\Mb$ is assumed to be locally compact, it suffices  to repeat the arguments in the proof of \cite[p80, Proof of Lemma 6.1]{barles1994solutions}.
\ep

\subsection{Conclusion of the proof of Theorem \ref{thm:viscosity}}\label{sec: conc proof super sol}  

 We already know from Proposition \ref{prop: subsol} and Proposition \ref{prop : vrcirc sursol} that $\vr^{*}$ and $\vr_{\circ}$ are respectively    a bounded viscosity sub- and super-solution of \reff{eq: pde interior}-\reff{eq: pde T}. By \reff{eq: def vr}, Remark \ref{rem: vrcic=KcTg apres T} and \reff{eq: hyp conti KcT et Kc}, we also have $\vr_{\circ}\ge \vr^{*}$ on $(0,T)\x \R^{d}\x \Mb$.  In view of Assumption \ref{ass: comp} and Remark \ref{rem: vrn le vr},  $\vr$ is continuous on $\Zb\x \Mb$ and is the unique bounded viscosity solution of \reff{eq: pde interior}-\reff{eq: pde T}.   \ep 
 
 \begin{Remark}{The above arguments actually show that $(\vr_{n})_{n\ge 1}$ converges to $\vr$. }
 \end{Remark}
 %%%%%%%%%%%%%%%%
 
 \section{A sufficient condition for the comparison } \label{preuve_comparaison}
 
 In this section, we provide a sufficient condition for Assumption \ref{ass: comp} to hold. {We refer to \cite{BBD16extended} for examples of application.}

 \begin{Proposition}\label{comparaison}  Assumption \ref{ass: comp}  holds whenever  there exists a continuous function $\Psi$ on $[0,2T]\x \R^{d} \times \Mb$ satisfying
\begin{enumerate}[{\rm (i)}]
\item $\Psi(., m) \in C^{1,2}([0,T)\x \R^{d})$, for all $m\in \Mb$.
\item $\varrho \Psi\ge \Lc\Psi$ on $[0,T]\x\R^{d}\x \Mb$ for some constant $\varrho > 0$,
\item $\Psi-\Kc\Psi\geq \delta$ on $[0,T]\x\R^{d}\x \Mb$ for some $\delta > 0$,
\item $\Psi \geq \Kc_{T}[\tilde{g} ]$ on $[T,\infty)\x \R^{d}\x \Mb$ with $\tilde{g}(t, .) := e^{\varrho t}g(t, .)$ and $\varrho$ is defined in \emph{(ii)},
\item $\Psi^{-}$ is bounded.
\end{enumerate}
 \end{Proposition}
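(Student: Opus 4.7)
The strategy is the classical one for comparison in quasi-variational inequalities: perturb the subsolution by the strict supersolution $\Psi$ so as to transfer the strictness of (iii) into the subsolution inequality, then argue via doubling of variables. For $\lambda\in(0,1)$, set $U_{\lambda}:=U-\lambda\Psi$. The first task is to verify that $U_{\lambda}$ is a \emph{strict} viscosity subsolution in the following sense: whenever $\vp\in C^{1,2,0}$ touches $U_{\lambda}$ from above with equality at $(z_\circ,m_\circ)\in [0,T)\x\R^{d}\x\Mb$, the function $\vp+\lambda\Psi$ touches $U$ from above at the same point, and the subsolution property of $U$ combined with (ii), (iii), and the sub-additivity $\Kc(U_{\lambda}+\lambda\Psi)\le\Kc U_{\lambda}+\lambda\Kc\Psi$ of $\Kc$ (as a supremum of affine operators $\Kc^{a}$) yields
\begin{equation*}
\min\bigl\{-\Lc\vp-\lambda\varrho\Psi,\ \vp-\Kc U_{\lambda}+\lambda\delta\bigr\}(z_\circ,m_\circ)\le 0.
\end{equation*}
At $t_\circ=T$, condition (iv) furnishes the analogous strict inequality involving $\Kc_{T}g$ and $\Kc_{T}\tilde g$ after the $e^{\varrho t}$ rescaling.

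Assume for contradiction that $M:=\sup_{\Zb\x\Mb}(U-V)>0$. By (v), $\Psi\ge -C$ for some $C>0$, hence $M_{\lambda}:=\sup(U_{\lambda}-V)\ge M-\lambda C>0$ for all sufficiently small $\lambda$, while the standing hypothesis $U\le V$ on $(T,\infty)\x\R^{d}\x\Mb$ confines this positive excess to $\Zb\x\Mb$. One then localizes by a standard penalization $\eta\|x\|^{2}$ (with $\eta\downarrow 0$ at the end), combined with the local compactness of $\Mb$, to ensure that the relevant supremum is nearly attained at some $(\hat z,\hat m)$ in a compact subset of $\Zb\x\Mb$. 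Ishii's parabolic doubling of variables is then performed in the $(t,x)$ component only, since $m$ does not enter $\Lc$, with a test function achieving its $m$-maximum at $\hat m$, producing matched parabolic sub-/super-jets at a contact point converging to $(\hat z,\hat m)$.

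The case analysis at the contact point proceeds as follows. If the strict subsolution property gives $-\Lc\vp\le\lambda\varrho\Psi$ (the \emph{diffusion case}), then, combined with $-\Lc\tilde\vp\ge 0$ from the supersolution property of $V$, the standard Crandall--Ishii estimate together with the Lipschitz regularity of $(\mu,\sigma)$ from \reff{eq: hyp mu sigma F} leads, upon sending the doubling parameter to infinity, to a contradiction up to an $O(\lambda)$ term that vanishes as $\lambda\to 0$. If instead $\vp\le\Kc U_{\lambda}-\lambda\delta$ (the \emph{impulse case}), then the bound $U_{\lambda}\le V+M_{\lambda}$, the fact that $\kr$ is a probability kernel (so $\Kc^{a}$ preserves constants), and the supersolution inequality $V\ge\Kc V$ yield
\begin{equation*}
U_{\lambda}(\hat z,\hat m)\le\Kc U_{\lambda}(\hat z,\hat m)-\lambda\delta\le\Kc V(\hat z,\hat m)+M_{\lambda}-\lambda\delta\le V(\hat z,\hat m)+M_{\lambda}-\lambda\delta,
\end{equation*}
which contradicts $U_{\lambda}(\hat z,\hat m)-V(\hat z,\hat m)\ge M_{\lambda}-o(1)$ after the penalty parameters are sent to zero. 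The boundary case $t_\circ=T$ is treated in parallel, using (iv) and the supersolution inequality $V\ge\Kc_{T}g$ at $t=T$ instead of $V\ge\Kc V$. Finally, letting $\lambda\to 0$ yields $M\le 0$, the desired contradiction.

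The main obstacle I anticipate is the simultaneous handling of three localization devices in the doubling argument---the Ishii doubling in $x$, the auxiliary $\eta\|x\|^{2}$ penalty needed because $\R^{d}$ is unbounded, and the restriction to a compact subset of $\Mb$ exploiting its local compactness---together with the interface at $t=T$, where the equation changes form and where one must verify that the strict $\lambda\delta$ margin of the impulse case survives each of the limiting procedures in the correct order. Ensuring uniform control in $\lambda$ across all these parameters, while keeping the bound $U_\lambda\le V$ on $(T,\infty)$ active near the boundary, is where the actual technical work lies.
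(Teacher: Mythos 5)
Your overall strategy (use $\Psi$ to create a strict margin, double variables, split into an impulse case and a diffusion case) is the right family of argument, and your impulse case and your treatment of $t=T$ essentially reproduce the paper's Steps 3--4: the margin $\lambda\delta$ from (iii), the fact that $\kr(\cdot|z,m,a)$ is a probability kernel, condition (iv) and the continuity assumption \reff{eq: hyp conti KcT et Kc} do give a contradiction with the value of the supremum, with $\lambda$ kept fixed. The genuine gap is in your diffusion case. Perturbing the subsolution by $U_{\lambda}:=U-\lambda\Psi$ only yields, at a contact point, $-\Lc\vp\le \lambda\Lc\Psi\le\lambda\varrho\Psi$, and this is not a strict subsolution inequality: (v) bounds only $\Psi^{-}$, so $\lambda\varrho\Psi$ may be positive and large, i.e.\ your perturbation \emph{weakens} rather than strengthens the PDE part. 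Since equation \reff{eq: pde interior} carries no zeroth-order term, subtracting the supersolution inequality for $V$ at the Crandall--Ishii contact points gives only $0\le \lambda\varrho\Psi(\cdot)+C\left(n\|x_{n}-y_{n}\|^{2}+\eps(1+\|x_{n}\|^{2}+\|y_{n}\|^{2})\right)$, which is vacuous as the doubling parameter tends to infinity: there is no positive quantity on the left to contradict, so ``a contradiction up to an $O(\lambda)$ term'' does not materialize (and that term is itself uncontrolled, since $\Psi$ is evaluated at points depending on all penalization parameters and is not bounded above).

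The paper avoids this by first rescaling, $\tilde u:=e^{\varrho t}U$ and $\tilde v:=e^{\varrho t}V$, so that they become sub- and supersolutions of $\min\{\varrho\vp-\Lc\vp,\vp-\Kc\vp\}=0$ with terminal data built from $\tilde g$, and by perturbing the \emph{supersolution}, $\tilde v^{\lambda}:=(1-\lambda)\tilde v+\lambda\Psi$, which condition (ii) keeps a supersolution of $\varrho\vp-\Lc\vp\ge 0$ while (iii)--(iv) provide the strict margins for the nonlocal and terminal parts. In the diffusion case the Ishii estimate then reads $\varrho\bigl(\tilde u(t_{n},x_{n},m_{n})-\tilde v^{\lambda}(t_{n},y_{n},m_{n})\bigr)\le C\bigl(n\|x_{n}-y_{n}\|^{2}+\eps(1+\|x_{n}\|^{2}+\|y_{n}\|^{2})\bigr)$, whose left-hand side converges (as $n\to\infty$, then $\eps\to0$ along a subsequence, the paper's Step 6) to $\varrho\sup(\tilde u-\tilde v^{\lambda})>0$, while the right-hand side vanishes: the zeroth-order term produced by the rescaling is exactly what your argument is missing. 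To repair your version you would have to perform the $e^{\varrho t}$ change of variables globally (not only at $t=T$) before perturbing, or introduce another strictness device in the PDE part (e.g.\ a term of the type $-\eta/(T-t)$); with the rescaling in place, your ``perturb the subsolution'' variant becomes a mirror image of the paper's proof, and there is then no need to send $\lambda\to0$.
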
	
The idea of the proof is the same as in \cite[Proposition 4.12]{bouchard2009stochastic}. Note that their condition {\rm H2 (v)} is not required here because we only consider bounded sub and super-solutions and we take a different approach. To avoid it, we slighlty reinforce the hypothesis {\rm H2 (iii)} and asked for $\Psi^{-}$ to be bounded.
\vs2 

\proof
Step 1. As usual, we shall argue by contradiction. We assume that there exists $(z_{0}, m_{0}) \in \mathbf{Z} \times \mathbf{M}$ such that $(U-V)(z_{0}, m_{0}) > 0$, in which  $U$ and $V$ are as in Assumption \ref{ass: comp}. Recall the definition of $\Psi$, $\varrho$ and $\tilde g$ in Proposition \ref{comparaison}. We set $\tilde{u}(t, x, m) := e^{\varrho t}U(t, x, m)$ and $\tilde{v}(t, x, m) := e^{\varrho t}V(t, x, m)$ for all $(t, x, m) \in \mathbf{Z} \times \Mb$. Then, there exists $\lambda\in (0,1)$  such that

\begin{equation}\label{comparaison_0}
		(\tilde{u}-\tilde{v}^{\lambda})(z_{0}, m_{0}) > 0,
\end{equation}
in which $\tilde{v}^{\lambda} := (1-\lambda)\tilde{v} + \lambda\Psi$. Note that $\tilde{u}$ and $\tilde{v}$ are sub and supersolution on $\mathbf{Z} \times \Mb$ of
	\begin{equation}
		\min\left\{\varrho\varphi - \mathcal{L}\varphi, \varphi - \mathcal{K}\varphi\right\} = 0
	\end{equation}
associated to the boundary condition
	\begin{equation}\label{comparaison_bord}
		\min\left\{\vp- \Kc_{T}\tilde{g},\vp- \Kc \vp\right\}=0.
	\end{equation}

Step 2. Let ${\rm d}_{\mathbf{M}}$ be a metric on $\Mb$ compatible with the topology of weak convergence. For $(t, x, y, m) \in \mathbf{Z} \times \Xb \times \Mb$, we set
	\begin{equation}\label{gamma_vareps}
		\Gamma_{\varepsilon}(t, x, y, m) := \tilde{u}(t, x, m)-\tilde{v}^{\lambda}(t, y, m) - \varepsilon\left(\|x\|^{2} + \|y\|^{2} + {\rm d}_{\mathbf{M}}(m)\right)
	\end{equation}
with $\varepsilon>0$ small enough such that $\Gamma_{\varepsilon}(t_{0}, x_{0}, x_{0}, m_{0}) > 0$. Note that the supremum of $(t, x, m) \mapsto \Gamma_{\varepsilon}(t, x, x, m)$ over $\mathbf{Z} \times \Xb\x \mathbf{M}$ is achieved by some $(t_{\varepsilon}, x_{\varepsilon}, x_{\varepsilon}, m_{\varepsilon})$. This follows from the  the upper semi-continuity of $\Gamma_{\varepsilon}$ and the fact that $\tilde{u}, -\tilde{v}, -\Psi$ are bounded from above. Recall that  $\mathbf{M}$ is locally compact. 
For $(t, x, y, m) \in \mathbf{Z} \times \mathbf{X} \times \mathbf{M}$, we set
	\begin{equation*}
		\Theta_{\varepsilon}^{n}(t, x, y, m) := \Gamma_{\varepsilon}(t, x, y, m) - n\|x - y\|^{2}.
	\end{equation*}
Again, there is $(t_{n}^\eps, x_{n}^\eps, y_{n}^\eps, m_{n}^\eps) \in \mathbf{Z} \times \mathbf{X} \times \mathbf{M}$ such that
	$
		\sup_{\mathbf{Z} \times \mathbf{X} \times \mathbf{M}}\Theta_{\varepsilon}^{n} = \Theta_{\varepsilon}^{n}(t_{n}^\eps, x_{n}^\eps, y_{n}^\eps, m_{n}^\eps).
	$
It is  standard to show that, after possibly considering a subsequence, 
	\begin{equation}\label{comparaison_standard}
		\begin{aligned}
			&(t_{n}^\eps, x_{n}^\eps, y_{n}^\eps, m_{n}^\eps) \rightarrow (\hat{t}_{\varepsilon}, \hat{x}_{\varepsilon}, \hat{x}_{\varepsilon}, \hat{m}_{\varepsilon})\in \Zb\x \Xb\x \Mb,   \ \ n\|x_{n}^\eps-y_{n}^\eps\|^{2} \rightarrow 0,  \\
			&\text{ and}\;\;\Theta_{\varepsilon}^{n}(t_{n}^\eps, x_{n}^\eps, y_{n}^\eps, m_{n}^\eps) \rightarrow \Gamma_{\varepsilon}(\hat{t}_{\varepsilon}, \hat{x}_{\varepsilon}, \hat{x}_{\varepsilon}, \hat{m}_{\varepsilon}) = \Gamma_{\varepsilon}(t_{\varepsilon}, x_{\varepsilon}, x_{\varepsilon}, m_{\varepsilon}),
		\end{aligned}
	\end{equation}
see e.g.~\cite[Lemma 3.1]{crandall1992user}.

Step 3. We first assume that, up to a subsequence,
	$
		(\tilde{u} - \mathcal{K}\tilde{u})(t_{n}^\eps, x_{n}^\eps, m_{n}^\eps) \leq 0,$ for $ n \geq 1.
	$
It follows from the supersolution property of $\tilde{v}$ and Condition (iii) of Proposition \ref{comparaison} that
	\[
		\tilde{u}(t_{n}^\eps, x_{n}^\eps, m_{n}^\eps) - \tilde{v}^{\lambda}(t_{n}^\eps, y_{n}^\eps, m_{n}^\eps) \leq \mathcal{K}\tilde{u}(t_{n}^\eps, x_{n}^\eps, m_{n}^\eps) -\mathcal{K} \tilde{v}^{\lambda}(t_{n}^\eps, y_{n}^\eps, m_{n}^\eps) - \lambda \delta.
	\]
Passing to the $\limsup$ and using \reff{comparaison_standard} and \reff{eq: hyp conti KcT et Kc}, we obtain 
	$
		(\tilde{u} - \tilde{v}^{\lambda})(\hat{t}_{\varepsilon}, \hat{x}_{\varepsilon}, \hat{m}_{\varepsilon}) + \lambda \delta \leq \mathcal{K}(\tilde{u} - \tilde{v}^{\lambda}) (\hat{t}_{\varepsilon}, \hat{x}_{\varepsilon}, \hat{m}_{\varepsilon}).
	$
In particular, by (\ref{gamma_vareps}),
	$
		\Gamma_{\varepsilon}(\hat{t}_{\varepsilon}, \hat{x}_{\varepsilon}, \hat{x}_{\varepsilon}, \hat{m}_{\varepsilon}) + \lambda \delta \leq \mathcal{K}(\tilde{u} - \tilde{v}^{\lambda})(\hat{t}_{\varepsilon}, \hat{x}_{\varepsilon}, \hat{m}_{\varepsilon}).
	$
Now let us  observe that 
	\begin{align}
		\sup_{\mathbf{Z} \times \mathbf{M}} (\tilde{u} - \tilde{v}^{\lambda}) &= \lim_{\varepsilon \rightarrow 0}\sup_{(t,x,m)\in\mathbf{Z} \times \mathbf{M}} \Gamma_{\varepsilon}(t, x, x, m)= \lim_{\varepsilon \rightarrow 0}\Gamma_{\varepsilon}(t_{\varepsilon}, x_{\varepsilon}, x_{\varepsilon}, m_{\varepsilon} )= \lim_{\varepsilon \rightarrow 0}\Gamma_{\varepsilon}(\hat t_{\varepsilon}, \hat x_{\varepsilon}, \hat x_{\varepsilon}, \hat m_{\varepsilon}),\label{comparaison_gamma_sup}  
	\end{align}
in which the last identity follows from \reff{comparaison_standard}. 
Combined with the above inequality, this shows that 
	$
		\sup_{\mathbf{Z} \times \mathbf{M}} (\tilde{u} - \tilde{v}^{\lambda})+ \lambda\delta \leq \lim_{\varepsilon \rightarrow 0} \mathcal{K}(\tilde{u} - \tilde{v}^{\lambda})(\hat{t}_{\varepsilon}, \hat{x}_{\varepsilon}, \hat{m}_{\varepsilon}) ,
	$
which leads to a contradiction for $\varepsilon$ small enough.

Step 4.  We now show that there is a subsequence such that $t_{n}^\eps < T$ for all $n \geq 1$. If not, one can assume that $t_{n}^\eps = T$ and it follows from the boundary condition (\ref{comparaison_bord}) and step 3 that $\tilde{u}(T, x_{n}^\eps, m_{n}^\eps) \leq \Kc_{T} \tilde{g}(T, x_{n}^\eps, m_{n}^\eps)$ for all $n \geq 1$. Since, by (\ref{comparaison_bord}) and Condition (iv) of Proposition \ref{comparaison}, $\tilde{v}^{\lambda}(T, y_{n}^\eps, m_{n}^\eps) \geq \Kc_{T}\tilde{g}(T, y_{n}^\eps, m_{n}^\eps)$, it follows that $\tilde{u}(T, x_{n}^\eps, m_{n}^\eps) - \tilde{v}^{\lambda}(T, y_{n}^\eps, m_{n}^\eps) \leq \Kc_{T}\tilde{g}(T, x_{n}^\eps, m_{n}^\eps) - \Kc_{T}\tilde{g}(T, y_{n}^\eps, m_{n}^\eps)$. Hence,
	$
		\Gamma_{\varepsilon}(T, x_{n}^\eps, y_{n}^\eps, m_{n}^\eps) \leq  \Kc_{T} \tilde{g}(T, x_{n}^\eps, m_{n}^\eps) -  \Kc_{T} \tilde{g}(T, y_{n}^\eps, m_{n}^\eps).
	$
Combining \reff{eq: hyp conti KcT et Kc}, 	(\ref{comparaison_standard}) and (\ref{comparaison_gamma_sup}) as above, we obtain 
$\sup (\tilde{u} - \tilde{v}^{\lambda})\le 0$, a contradiction. 
\medskip

Step 5.  In view of step 3 and 4, we may assume that
	$
		t_{n}^\eps < T $ and $ (\tilde{u} - \mathcal{K}\tilde{u})(t_{n}^\eps, x_{n}^\eps, m_{n}^\eps) > 0 $  for all  $ n \geq 1.
	$
Using Ishii's Lemma and following standard arguments, see Theorem 8.3 and the discussion after Theorem 3.2 in \cite{crandall1992user}, we deduce from the sub- and supersolution viscosity property of $\tilde{u}$ and  $\tilde{v}^{\lambda}$, and the Lipschitz continuity assumptions on $\mu$ and $\sigma$, that 
	\[
		\varrho\left(\tilde{u}(t_{n}^\eps, x_{n}^\eps, m_{n}^\eps)-\tilde{v}^{\lambda}(t_{n}^\eps, y_{n}^\eps, m_{n}^\eps)\right) \leq C\left( n \|x_{n}^\eps - y_{n}^\eps\|^{2} + \varepsilon\left( 1+ \|x_{n}^\eps\|^{2}+ \|y_{n}^\eps\|^{2}\right)\right),
	\]
	for some $C>0$, independent on $n$ and $\eps$. 
In view of (\ref{gamma_vareps}) and (\ref{comparaison_standard}), we get
	\begin{equation}\label{comparaison_fin}
		\varrho \Gamma_{\varepsilon}(\hat t_{\varepsilon},\hat  x_{\varepsilon}, \hat x_{\varepsilon}, \hat m_{\varepsilon})  \leq 2C \varepsilon\left(1 + \|\hat x_{\varepsilon}\|^{2}\right).
	\end{equation}
We shall prove in next step that the right-hand side of (\ref{comparaison_fin}) goes to 0 as $\varepsilon \rightarrow 0$, up to a subsequence. Combined with (\ref{comparaison_gamma_sup}), this  leads to a contradiction to (\ref{comparaison_0}).

\medskip

Step 6. We conclude the proof by proving the claim used above. First note that we can always construct a sequence $(\tilde t_{\eps},\tilde x_{\eps}, \tilde m_{\eps})_{\eps>0}$ such that 
$$
\Gamma_{\varepsilon}(\tilde t_{\eps},\tilde x_{\eps},\tilde x_{\eps}, \tilde m_{\eps}) \to \sup_{\Zb\x \Mb} (\tilde u-\tilde v^{\lambda})\;\;\mbox{ and } \;\;\eps( \|\tilde x_{\varepsilon}\|^{2}+{\rm d}_{\Mb}(\tilde m_{\eps}))\to 0 \;\;\mbox{ as } \eps\to 0.
$$
By  \reff{comparaison_standard}, 
$
\Gamma_{\varepsilon}(\tilde t_{\eps},\tilde x_{\eps},\tilde x_{\eps}, \tilde m_{\eps}) \le \Gamma_{\varepsilon}(\hat t_{\varepsilon},\hat  x_{\varepsilon}, \hat x_{\varepsilon}, \hat m_{\varepsilon}).
$
 Hence, 
$
 \sup_{\Zb\x \Mb} (\tilde u-\tilde v^{\lambda})\le \sup_{\Zb\x \Mb} (\tilde u-\tilde v^{\lambda})-2\liminf_{\eps\to 0} \varepsilon  \|\hat x_{\varepsilon}\|^{2}.
 $
  \ep
 
%%%%%%%%%%%%%%%%%%%%%%%%%%%%%%%%%%%%%%%%%%%%%%%%%%
 %%%%%%%%%%%%%%%%%%%%%%%%%%%%%%%%%%%%%%%%%%%%%%%%%%
 
 \bibliographystyle{plain}

\end{document}